\newcommand{\parag}[1]{\medbreak{\em\bf#1}}
\def\sm{\operatorname{sm}}
\def\cm{\operatorname{cm}}
\def\sn{\operatorname{sn}}
\def\cn{\operatorname{cn}}
\def\dn{\operatorname{dn}}
\def\C{\mathbb{C}}
\def\Q{\mathbb{Q}}
\def\Z{\mathbb{Z}}
\def\K{\mathbb{K}}
\def\ds{\displaystyle}
\def\cal{\mathcal}
\def\frak{\mathfrak}
\def\Res{\operatorname{Res}}
\newcommand{\Img}[2]{\includegraphics[width=#1truecm]{#2}}
\newtheorem{definition}{Definition}
\newtheorem{theorem}{Theorem}
\newtheorem{proposition}{Proposition}
\newtheorem{corollary}{Corollary}
\begin{document}

\title[Pseudo-factorials, elliptic functions, and continued fractions]{Pseudo-factorials, elliptic 
functions, and continued fractions}
\author{Roland Bacher and Philippe Flajolet}
\date{January 12, 2009; revised May 31, 2009. To appear in  \emph{The Ramanujan Journal}.}
\address{R.B.: \rm Institut Fourier (CNRS UMR 5582), 100 rue des Maths, 
Universit\'e de Grenoble I, F--38402 Saint Martin d'H\`eres, France.}
\address{P.F.: \rm {\sc Algorithms}, INRIA-Rocquencourt, F--78150 Le Chesnay, France}

\begin{abstract}
This study presents miscellaneous properties
of pseudo-factorials, which are numbers whose recurrence
relation is a twisted form of that of usual factorials. 
These numbers are associated with special elliptic functions, 
 most notably, a Dixonian  and a Weierstra{\ss} 
function, which parametrize the Fermat cubic curve and are 
relative to a hexagonal lattice.
A 
continued fraction expansion of the ordinary generating function
of pseudo-factorials, first discovered empirically, is 
established here.  This article  also provides a characterization 
of the associated orthogonal polynomials, which appear to form a new family
of ``elliptic polynomials'', as well as  various other properties of pseudo-factorials, including 
a hexagonal lattice sum expression and elementary congruences.
\end{abstract}

\maketitle


\section{\bf Pseudo-factorials} 
Start from the innocuous looking recurrence, 
\begin{equation}\label{alphadef}
\alpha_{n+1}=(-1)^{n+1}
\sum_{k=0}^n \binom{n}{k}\alpha_k\alpha_{n-k}, \qquad \alpha_0=1,
\end{equation}
which determines a sequence of integers with initial values
\begin{equation}\label{inivals}
1,~-1,~-2,~2,~16,~-40,~-320,~1040,~12160,~-52480,~-742400\,.
\end{equation}
These numbers will be called \emph{pseudo-factorials}, since the omission
of the sign alternation $(-1)^{n+1}$ in Equation~\eqref{alphadef} determines
the sequence of factorials, $0!,1!,2!,\ldots$ 
At the suggestion of one of us (Bacher, October 2004), they have been included 
as Sequence {\bf A098777} in 
the \emph{On-line Encyclopedia of Integer Sequences}~\cite{Sloane08},
which is brilliantly maintained by Sloane and a gang of dedicated volunteers.
The purpose of this note is to show that these numbers, though not classical,
have a host of interesting properties.

The \emph{exponential generating function},
\begin{equation}\label{egf}
f(z):=\sum_{n\ge0} \alpha_n \frac{z^n}{n!}
=1-z-z^2+\frac{z^3}{3}+2\frac{z^4}{3}-\frac{z^5}{3}-\cdots,
\end{equation}
 is fundamental to our  treatment.  (The fact that the  
absolute values $|\alpha_n|$
are domi\-nated  by factorials implies that $f(z)$ is analytic
at least in $|z|<1$.)
We first elucidate the
relation between~$f(z)$ and  elliptic functions
of a kind introduced by Alfred Cardew Dixon in~1890 (\emph{vide}~\cite{Dixon90}),
then  show the reduction to the more common Weierstra{\ss} framework of $\wp$--functions: this 
forms the subject of Sections~\ref{dixon-sec} and~\ref{weier-sec}.
A simple consequence of the elliptic connections, worked out in Section~\ref{lat-sec}, 
is an expression of pseudo-factorials as 
sums over a hexagonal lattice.
Next,
we establish a continued fraction of a new type,
relative to the \emph{ordinary generating function} of
pseudo-factorials,
\[
F(z):=\sum_{n\ge0} \alpha_n z^n =1-z-2z^2+2z^3+16{z^4}-40{z^5}-\cdots ,
\]
to be taken in the sense of formal power series since its radius of convergence is~0.
To wit:
\begin{equation}\label{confraca}
F(z)\equiv \sum_{n\ge0} \alpha_n z^n=\cfrac{1}{1+z+\cfrac{3\cdot 1^2\cdot z^2}{
1-z+\cfrac{2^2\cdot z^2}{1+3z+\cfrac{3\cdot 3^2\cdot z^2}{1-3z+\cfrac{4^2\cdot z^2}{\ddots}}}}},
\end{equation}
where the denominators are successively $+1,-1,+3,-3,+5,-5,\ldots$, and 
the numerators
are $3\cdot 1^2,2^2,3\cdot 3^2,4^2,3\cdot 5^2,6^2,\ldots$.
Such a repetitive pattern of order~2 
in what is known as a \emph{Jacobi fraction}
is somewhat unusual.
The fraction~\eqref{confraca} was first discovered experimentally
---proving it in Sections~\ref{sradd-sec}--\ref{proofadd-sec}  is the central theme of the present study.

Finally, the     convergents   of the      continued
fraction~\eqref{confraca}  can   be   made  explicit,   via generating
functions:   this is conducive  to  what seems to be  a   new class of
``elliptic polynomials'' in the sense of Lomont--Brillhart~\cite{LoBr01}; see Section~\ref{ortho-sec}.
In Section~\ref{cong-sec}, we then 
draw several consequences of the previous developments, 
in the form of Hankel determinant evaluations and elementary congruence properties
of pseudo-factorials.

\parag{Why?} As kindly suggested by a referee and by 
the editor of \emph{The Ramanujan Journal},
we offer a few comments relating to the motivations 
behind studies, such as the
present one.

One primary motivation is to gain a better understanding of a still mysterious 
class of continued fractions, the ones that have coefficients that are
polynomial (or rational) functions in the depth~$n$.
This highly general but somewhat impenetrable class of special functions
seems to have been first noticed by Pollaczek in~\cite{Pollaczek56}.
It is piquant to note that Apery's continued fraction~\cite{Poorten79},
which first led to an irrationality proof of Riemann's $\zeta(3)$,
has cubic denominators and sextic denominators, a feature also shared by the
Conrad continued fractions~\cite{Conrad02,CoFl06}
relative to the Dixonian functions $\sm,\cm$,
themselves closely related to pseudo-factorials (Section~\ref{dixon-sec}).
Our main continued fraction (Theorem~\ref{main-thm}) is Pollaczek
in a mildly extended sense: it has polynomial coefficients, 
but modulated by
an unsual periodicity of order~2.

On another register, the relation of elliptic functions to continued fractions is
an old subject, going back at least to Eisenstein, Stieljes, Rogers, and Ramanujan. 
Whereas the $q$--series and $\vartheta$--function aspects 
are not immediately relevant to our discussion, 
we may observe that the continued fractions relative 
to the Jacobian functions $\sn,\cn,\dn$ have arithmetic content:
together with the Hankel determinant product evaluations that they imply,
they have been used by Milne  in his penetrating study~\cite{Milne02} of 
representations of integers as sums of $4n^2$ or $4n(n+1)$ square or triangular numbers.
In this spirit, we offer some new Hankel determinant evaluations in Section~\ref{cong-sec},
but will leave to others the task of determining whether they are of some arithmetic interest.

The class
of orthogonal polynomials associated to any continued fraction, 
whose  coefficients are rational-in-the-depth, is also of interest.
Many instances of low degree have been categorized by Chihara in~\cite{Chihara78}.
 ``Elliptic polynomials'', that is, orthogonal polynomials
associated with series expansions of elliptic functions
have a tradition that goes back at least to Carlitz~\cite{Carlitz60}
and they form the topic of the entire book by Lomont and Brillhart~\cite{LoBr01}.
The discovery of a new class (Section~\ref{ortho-sec}) is 
of interest in this context.

Finally, the pseudo-factorials are tightly coupled with
$\frac{1}{\pi}\Gamma(\frac13)^3$, as seen from our discussion of lattice sums 
in Section~\ref{lat-sec}. In this context,
it is well worth noting a spectacular
arithmetic continued fraction for $\Gamma(\frac13)^3$,
recently obtained by Tanguy Rivoal~\cite{Rivoal09},
which is of the sextic--duodecimic type(!). 
This and previous observations reflect the fact that our understanding
of an orbit of questions, surrounding Pollaczek continued fractions, elliptic functions, elliptic polynomials,
and diophantine approximation properties, is still fragmentary; but
they also suggest that  ``hidden'' structures are
yet to be discovered in this area (see also our brief conclusion in Section~\ref{concl-sec}).

\section{\bf Elliptic connections: Dixonian functions} \label{dixon-sec}

This section serves to establish the first connection between pseudo-factorials
and elliptic functions. The starting point is
the exponential generating function defined by~\eqref{egf};
it satisfies a functional equation,
\begin{equation}\label{funeq}
f'(z)=-f(-z)^2,
\end{equation}
which directly translates the defining recurrence~\eqref{alphadef}.

To make $f(z)$ explicit, take
the functional equation~\eqref{funeq} and differentiate once, so that
\begin{equation}\label{interim0}
f''(z)=2f(-z)f'(-z), \qquad \hbox{implying}\quad
f''(z)=-2\sqrt{-f'(z)}f(z)^2,
\end{equation}
since, by~\eqref{funeq} again,
one has $f'(-z)=-f(z)^2$ and $f(-z)=\sqrt{-f'(z)}$.
In order to solve
the nonlinear differential equation, ``cleverly'' 
multiply by~$\sqrt{-f'(z)}$ to get
\[
f''(z)\sqrt{-f'(z)}=2f(z)^2f'(z),
\]
which is integrated to give
\begin{equation}\label{interim1}
-\frac23 \left(-f'(z)\right)^{3/2}=\frac23 f(z)^3-\frac23 K,
\end{equation}
with~$K$ a yet unspecified constant.
Equivalently, one has
\begin{equation}\label{interim2}
\frac{-f'(z)}{(K-f(z)^3)^{2/3}}=1,
\end{equation}
which upon one more integration gives
\begin{equation}\label{intform1}
\int_{f(z)}^1\frac{dw}{(K-w^3)^{2/3}}=z,
\end{equation}
where use has been made of the initial condition $f(0)=1$.
The constant~$K$ is finally identified by
means of a second order expansion (with
$f'(0)=-1$, $f''(0)=-2$),  to the effect that 
one must have $K=2$. (The computations parallel those of~\cite{CoFl06}.)

In view of our subsequent treatment, it is convenient to
standardize~\eqref{intform1}.
A linear change of variables yields
\begin{equation}\label{intform2}
\int_{2^{-1/3}f(z)}^{2^{-1/3}} \frac{dy}{(1-y^3)^{2/3}}=2^{1/3}z,
\end{equation}
where we have taken into account that~$K=2$.
Throughout this study, a fundamental constant is $\pi_3$ (a period of the 
function~$\sm$ defined below in~\eqref{dix1}),
\begin{equation}\label{perio}
\pi_3:=3\int_0^{1}\frac{dy}{(1-y^3)^{2/3}}=B\left(\frac13,\frac13\right)=
\frac{\Gamma(\frac13)^2}{\Gamma(\frac23)}=
\frac{\sqrt{3}}{2\pi}\Gamma\left(\frac13\right)^3, 
\end{equation}
where the evaluation results from the classical Eulerian integral~\cite[\S12.4]{WhWa27}:
\begin{equation}\label{betadef}
\def\B{\operatorname{B}}
\B(\alpha,\beta):=\int_0^1 t^{\alpha-1}(1-t)^{\beta-1}\, dt
=\int_0^\infty \frac{u^{\alpha-1}}{(1+u)^{\alpha+\beta+1}}\, du
=\frac{\Gamma(\alpha)\Gamma(\beta)}{\Gamma(\alpha+\beta)}.
\end{equation}
(Numerically\footnote{
	The notation~$x\doteq \xi$, with $\xi$ a decimal fraction, indicates that~$\xi$ is an approximation
	of~$x$ to the last digit stated.
}, we find $\pi_3\doteq 5.29991\,62508$.)
A simple computation shows that 
\[\int_0^{2^{-1/3}} \frac{dy}{(1-y^3)^{2/3}} 
=\frac16 B\left(\frac13,\frac13\right) =\frac{\pi_3}{6},
\]
so that we can write
\begin{equation}\label{dix0}
\int_0^{2^{-1/3} f(z)}  \frac{dy}{(1-y^3)^{2/3}}=\frac{\pi_3}{6}-2^{1/3}z.
\end{equation}

The function $f(z)$ can now be expressed in terms of specific
elliptic functions introduced by A.~C. Dixon in his 
original memoir~\cite{Dixon90}. 
Define the function
$\sm(z)$ by the equation
\begin{equation}\label{dix1}
\int_0^{\sm(z)}  \frac{dy}{(1-y^3)^{2/3}}=z.
\end{equation}
Then, a comparison of~\eqref{dix0} and~\eqref{dix1} permits us to identify~$f(z)$.
\begin{theorem}\label{dix-thm} 
The exponential generating function of pseudo-factorials 
satisfies
\begin{equation}\label{dixie}
f(z)=2^{1/3} \sm\left(\frac{\pi_3}{6}-2^{1/3}z\right),
\end{equation}
where the Dixonian elliptic function $\sm(z)$ is as in~\eqref{dix1} and~$\pi_3$ is
the constant~\eqref{perio}.
\end{theorem}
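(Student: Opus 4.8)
The plan is to read off the identity directly from the two integral representations that have already been set up, namely~\eqref{dix0} for $f(z)$ and~\eqref{dix1} defining $\sm$. First I would observe that~\eqref{dix0} says precisely that, with $u:=2^{-1/3}f(z)$, one has $\int_0^u (1-y^3)^{-2/3}\,dy = \frac{\pi_3}{6} - 2^{1/3}z$. Comparing this with the defining relation~\eqref{dix1}, which says $\int_0^{\sm(w)}(1-y^3)^{-2/3}\,dy = w$, I would conclude that $u = \sm\!\left(\frac{\pi_3}{6} - 2^{1/3}z\right)$, i.e. $2^{-1/3}f(z) = \sm\!\left(\frac{\pi_3}{6} - 2^{1/3}z\right)$, which rearranges to~\eqref{dixie}. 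So at the formal level the theorem is essentially immediate once~\eqref{dix0} is in hand.

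The point that genuinely needs care — and which I expect to be the main obstacle — is the \emph{invertibility/branch} issue: equation~\eqref{dix1} defines $\sm$ only implicitly, and passing from ``$\int_0^u = w$'' to ``$u = \sm(w)$'' requires knowing that the map $u \mapsto \int_0^u (1-y^3)^{-2/3}\,dy$ is a local bijection near the relevant point, with $\sm$ its genuine inverse on the appropriate interval, so that no spurious branch of the multivalued integrand has been picked up. I would handle this by checking it first for real $z$ in a neighbourhood of $0$: there $f(0)=1$, so $u$ starts at $2^{-1/3}\in(0,1)$, the integrand $(1-y^3)^{-2/3}$ is positive and smooth on $(-1,1)$, hence the integral is strictly increasing there and has a well-defined analytic inverse; and at $z=0$ the right-hand side of~\eqref{dix0} is exactly $\pi_3/6$, which by the computation just above~\eqref{dix0} equals $\int_0^{2^{-1/3}}(1-y^3)^{-2/3}\,dy$, so the two sides agree at $z=0$ with matching derivatives (using $f'(0)=-1$). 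Differentiating~\eqref{dix1} gives the standard ODE $\sm'(z)=(1-\sm(z)^3)^{2/3}$ for the Dixonian function (as in Dixon's memoir and~\cite{CoFl06}), and one can equally check that $g(z):=2^{1/3}\sm(\tfrac{\pi_3}{6}-2^{1/3}z)$ satisfies the same first-order ODE as $f$ — namely the one encoded in~\eqref{interim2} — with the same value at $z=0$; uniqueness for the analytic initial value problem then forces $f\equiv g$ near $0$.

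Finally I would extend the identity from a neighbourhood of $0$ to all of the common domain by analytic continuation: $f$ is analytic at least in $|z|<1$ by the remark following~\eqref{egf}, $\sm$ is a (meromorphic) elliptic function once one knows Dixon's theory, and the affine substitution $z\mapsto \frac{\pi_3}{6}-2^{1/3}z$ is entire, so $g(z)=2^{1/3}\sm(\frac{\pi_3}{6}-2^{1/3}z)$ is meromorphic on $\C$; since $f$ and $g$ agree on an open set they agree wherever both are defined, which gives~\eqref{dixie} as an identity of meromorphic functions and in particular shows $f$ extends meromorphically to $\C$. In writing this up I would keep the branch discussion brief — it is genuinely the only subtle point, the rest being a one-line comparison of integrals — and I would point to~\cite{CoFl06} and~\cite{Dixon90} for the basic properties of $\sm$ (periodicity, the value $\pi_3$ as a period, the addition formula) that legitimize treating $\sm$ as a bona fide elliptic function rather than merely a local inverse.
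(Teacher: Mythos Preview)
Your proposal is correct and follows exactly the paper's approach: the paper's entire argument is the single sentence ``a comparison of~\eqref{dix0} and~\eqref{dix1} permits us to identify~$f(z)$,'' which is precisely your first paragraph. Your additional care about the branch/invertibility issue and the analytic continuation is welcome rigor that the paper leaves implicit, but it is elaboration of the same idea rather than a different route.
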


We can offer a few comments regarding Dixonian functions.
There is actually a \emph{pair} of ``higher-order trigonometric''
functions, $\sm$ and $\cm$, where 
$\sm$ and $\cm$ are evocative of a sine and a cosine function, respectively.
Their properties can be developed
from first principles, as done by Dixon followed by Conrad--Flajolet~\cite{Conrad02,CoFl06,Dixon90}, starting with the differential system,
\begin{equation}\label{smcm}
\sm'(z)=\cm(z)^2, \qquad
\cm'(z)=-\sm(z)^2,
\end{equation}
and initial conditions~$\sm(0)=0$, $\cm(0)=1$. 
(See also the works of
Lundberg~\cite{Lundberg79} and the 
recent developments by Lindquist and Peetre~\cite{LiPe01b,LiPe01}
for a yet more general approach.)
For the record, we note that
\[
\sm(z)=z-4\frac{z^4}{4!}+160\frac{z^7}{7!}-20800\frac{z^{10}}{10!}-\cdots,
\quad
\cm(z)=1-2\frac{z^3}{3!}+40\frac{z^6}{6!}-3680\frac{z^9}{9!}+\cdots\,,
\]
whose coefficients ($1,-4,160,\ldots$ and $1,-2,40,\ldots$)
are respectively~{\bf A104133} and {\bf A104134} of Sloane's \emph{Encyclopedia}.
The works of  Conrad and Flajolet~\cite{Conrad02,CoFl06}
provide continued fraction expansions for the ordinary generating function of 
these coefficients, but these are then relative to the expansions of~$\sm,\cm$ at~$0$,
and \emph{not} at~$\pi_3/6$, as in~\eqref{dixie}.
Finally, we observe that, by the calculation~\eqref{interim1} and by~\eqref{smcm},
we have the identity
\[
\sm(z)^3+\cm(z)^3=1,
\]
so that the pair $(\sm(z),\cm(z))$ parametrizes the \emph{Fermat cubic} $\bf F_3$ defined by
the equation $X^3+Y^3=1$, which is of genus~1.

\section{\bf Elliptic connections: Weierstra{\ss} forms} \label{weier-sec}

It is \emph{a priori} possible to reduce the exponential generating function $f(z)$
of pseudo-factorials to any
of the several canonical forms of elliptic functions. Here, we show, by elementary calculations
similar to the ones of the previous section, how to arrive directly at 
an expression involving the Weierstra{\ss} function $\wp$.
We recall that this function $\wp(z)\equiv\wp(z;g_2,g_3)$
is classically defined by the 
nonlinear differential equation~\cite{WhWa27}
\begin{equation}\label{defwp}
\wp'(z)^2=4\wp(z)^3-g_2\wp(z)-g_3,
\end{equation}
together with the initial condition $\wp(z)\sim z^{-2}$ as $z\to0$. 
By design, the pair $(\wp(z),\wp'(z))$ parametrizes the elliptic curve 
$Y^2=4X^3-g_2X-g_3$, with invariants $g_2,g_3$.

The starting point is
the fundamental relation~\eqref{funeq}, namely, 
 $f'(z)=-f(-z)^2$.
We first claim the identity
\begin{equation}\label{E1}
f(z)^3+f(-z)^3=2.
\end{equation}
The proof is obtained by verifying that the derivative 
of the left-hand side is~0,
\[
\left(f(z)^3+f(-z)^3\right)'=
3f(z)^2f'(z)-3f(-z)^2f'(-z)=0,\]
(the final reduction uses~\eqref{funeq}), 
combined with the initial condition~$f(0)=1$.
Next, we have
\begin{equation}\label{E2}
\left(-f(z)f(-z)\right)'=f(-z)^3-f(z)^3,
\end{equation}
again by way of~\eqref{funeq}.

Now set 
$$g(z):=-f(z)f(-z)=-1+3z^2-3z^4+3z^6-\frac{18}{7}z^8+\frac{15}{7}z^{10}-\frac{12}{7}z^{12}+\cdots\,.$$
 The basic elliptic connection 
is provided by the differential relation
\begin{equation}\label{W0}
g'(z)^2=4g(z)^3+4,
\end{equation} 
which is clearly of the Weierstra{\ss} type~\eqref{defwp}.
To see it, it suffices to square the two sides of
the identities~\eqref{E1} and~\eqref{E2}, then compare
the outcomes.
Equation (20) then shows that $g(z)$ is closely related to 
the elliptic curve
$\mathcal E$ defined by
\begin{equation}\label{ell0}
Y^2=4X^3+4\ .
\end{equation}
The curve $\mathcal E$ contains six integral points\footnote{
	These are the only rational points of the curve~$\cal E$,
	since it is known that the Mordell curve $Y^2=X^3+1$ has six rational points;
	see, for instance, the SIMATH tables that are accessible 
	at {\tt tnt.math.metro-u.ac.jp/simath/MORDELL/MORDELL+}.
} (including the point
at infinity corresponding to the identity element of the underlying group) 
forming a cyclic group of order six. The non-trivial points of this
group are: $(-1,0)$ (of order $2$), $(0,\pm 2)$ (of order $3$)
and $(2,\pm 6)$ (of order $6$).
Since $(g(0),g'(0))=(-1,0)$, the series $g(z)$ represents the 
expansion  of the Weierstra{\ss}  function $\wp$
around the unique real $2$-torsion point $(-1,0)$ of 
$\mathcal E$.

The following result  recovers $f(z)$ from $g(z)\equiv -f(z)f(-z)$
and constitutes  the main result of this section.
\begin{theorem}\label{weier-thm}
Let $\wp(z):=\wp(z;0,-4)$ be the Weierstra{\ss} function with invariants $g_2=0$
and $g_3=-4$ and smallest positive real period\footnote{%
	The reader should be warned that the six numbers
	$\pm 6r,\pm 6re^{\pm i\pi/3}$ are \emph{not} the shortest non-zero elements of
	the period lattice for $\wp$. They generate a sublattice of index $3$
	in the period lattice of $\wp$ whose shortest elements are given by 
	the six numbers $\pm 2 i\sqrt{3}r,\ \pm2 i\sqrt{3}re^{\pm i\pi/3}$; see 
	also Section~\ref{lat-sec} and Figure~\ref{lat-fig}.
} 
\begin{equation}\label{perio2}
6r=\pi_32^{-1/3}=\frac{2^{-4/3}3^{1/2}}{\pi}\Gamma\left(\frac13\right)^3,
\end{equation}
with~$\pi_3$ as in~\eqref{perio}.
The exponential generating function of pseudo-factorials satisfies
\begin{equation}\label{wconnex}
f( i\sqrt{3}z)=\frac{-\wp'(z+3r)-2 i\sqrt{3}}{2 i\sqrt{3}\wp(z+3r)}.
\end{equation}
\end{theorem}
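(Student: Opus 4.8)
The plan is to connect the series $g(z) = -f(z)f(-z)$, which we already know satisfies the Weierstra{\ss}-type equation $g'(z)^2 = 4g(z)^3 + 4$ with initial data $(g(0),g'(0)) = (-1,0)$, to the standard $\wp$-function $\wp(z;0,-4)$, and then to invert the relation $g = -f(z)f(-z)$ together with $f'(z) = -f(-z)^2$ in order to recover $f$ itself.

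First I would pin down the translation argument. Since $\wp(u;0,-4)$ satisfies $\wp'^2 = 4\wp^3 - g_3 = 4\wp^3 + 4$ and has a double pole at the origin, while $g(z)$ is regular at $z=0$ with $g(0) = -1$, the two functions cannot be equal but must differ by a shift of argument. The value $-1$ is precisely the $X$-coordinate of the real $2$-torsion point $(-1,0)$ of the curve $Y^2 = 4X^3+4$, i.e.\ $\wp(\omega) = -1$ where $\omega$ is a half-period; the natural candidate (fixed by the stated normalization of the real period as $6r$) is $\omega = 3r$. So I would argue that $g(z) = \wp(z + 3r;0,-4)$, checking that both sides satisfy the same first-order ODE, agree in value at $z=0$ (namely $-1$), and have matching sign of the next Taylor coefficient; since $g$ is even and the solution of the ODE with prescribed value at a point where the derivative vanishes is locally unique, this identifies $g$ on a neighborhood of $0$, hence everywhere by analytic continuation. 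Establishing that the specific half-period is $3r$ (as opposed to another half-period) uses the period computation~\eqref{perio2}, which ties $6r$ to $\pi_3 2^{-1/3}$ and thus back to the Dixonian normalization.

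Next, with $g(z) = -f(z)f(-z) = \wp(z+3r)$ in hand, I would recover $f$ as follows. Differentiating $g$ and using the fundamental relation~\eqref{funeq} in the form~\eqref{E2} gives $g'(z) = f(-z)^3 - f(z)^3$, while~\eqref{E1} gives $f(z)^3 + f(-z)^3 = 2$. Hence $f(z)^3 = 1 - \tfrac12 g'(z)$ and $f(-z)^3 = 1 + \tfrac12 g'(z)$, and also $(f(z)f(-z))^3 = -g(z)^3$. Therefore
\[
f(z)^3 = \frac{(f(z)f(-z))^3}{f(-z)^3} = \frac{-g(z)^3}{1 + \tfrac12 g'(z)} = \frac{-2g(z)^3}{2 + g'(z)}.
\]
Using $g'^2 = 4g^3 + 4$, i.e.\ $4g^3 = g'^2 - 4 = (g'-2)(g'+2)$, the right-hand side simplifies to $-\tfrac12 g(z)^3 \cdot \tfrac{4}{2+g'(z)}\cdot\tfrac12$... more cleanly, $-2g^3/(2+g') = -\tfrac12(g'-2)(g'+2)/(g'+2) = -\tfrac12(g'-2) = 1 - \tfrac12 g'$, which is consistent but only recovers the cube. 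To extract the cube root coherently (choosing the branch with $f(0)=1$) I would instead write $f(z) = -g(z)^2/\big(\cdots\big)$ type expression that is manifestly single-valued: since $f(z)f(-z) = -g(z)$ and $f(z)^3 - f(-z)^3 = -g'(z)$, factor $f(z)^3 - f(-z)^3 = (f(z)-f(-z))(f(z)^2 + f(z)f(-z) + f(-z)^2)$ and combine with $f(z)^3 + f(-z)^3 = (f(z)+f(-z))(f(z)^2 - f(z)f(-z) + f(-z)^2)$ to solve the resulting linear-in-$f(z)$ system rationally in $g, g'$ — this yields a rational expression for $f(z)$ in terms of $\wp(z+3r)$ and $\wp'(z+3r)$ with no branch ambiguity.

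Finally I would perform the change of variable $z \mapsto i\sqrt{3}z$ dictated by the statement. The substitution is what converts invariants $(0,-4)$ and period $6r$ into the scaled picture: $\wp(i\sqrt3 z;0,-4)$ relates to $\wp(z)$ on a rescaled lattice, and tracking the homogeneity $\wp(\lambda u;g_2,g_3) = \lambda^{-2}\wp(u;\lambda^4 g_2,\lambda^6 g_3)$ with $\lambda = i\sqrt3$ (so $\lambda^4 = 9$, $\lambda^6 = -27$) converts the rational expression from the previous step into exactly~\eqref{wconnex}. The numerator $-\wp'(z+3r) - 2i\sqrt3$ and denominator $2i\sqrt3\,\wp(z+3r)$ should then fall out after collecting the $i\sqrt3$ factors; I would verify the formula is correct by checking the first few Taylor coefficients at $z=0$ against~\eqref{egf}, using $\wp(3r) = -1$, $\wp'(3r) = 0$, and the ODE to get higher derivatives of $\wp$ at $3r$. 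The main obstacle I anticipate is not any single computation but bookkeeping: making sure the half-period is $3r$ and not some other half-period, keeping the correct branch of the cube root throughout (so that $f(0) = +1$ rather than a root of unity times it), and correctly propagating the $i\sqrt3$ scaling through the Weierstra{\ss} homogeneity relation so that the final constants $2i\sqrt3$ match on the nose.
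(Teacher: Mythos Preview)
Your identification $g(z)=-f(z)f(-z)=\wp(z+3r;0,-4)$ and the half-period discussion are fine. The gap is the step where you assert that $f(z)$ can be recovered \emph{rationally} from $g(z)$ and $g'(z)$. Following your own algebra with $s=f(z)+f(-z)$ and $p=f(z)f(-z)=-g$, the relation $f(z)^3+f(-z)^3=2$ becomes $s(s^2-3p)=s^3+3gs=2$, an irreducible cubic in~$s$ over~$\C(g)$ (if $s=P/Q\in\C(g)$ with $\gcd(P,Q)=1$ then $Q^2\mid P^3$ forces~$Q$ constant, after which a degree count in $P^3+3gP-2=0$ gives a contradiction); it stays irreducible over the quadratic extension~$\C(g,g')$. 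Hence $f(z)$ is genuinely of degree~$3$ over $\C\big(g(z),g'(z)\big)=\C\big(\wp(z+3r),\wp'(z+3r)\big)$, and the ``rational expression for $f(z)$'' you promise does not exist. This has a lattice explanation: the formula~\eqref{wconnex} itself shows that $z\mapsto f(i\sqrt{3}z)$ is $\Lambda$-periodic for $\Lambda$ the period lattice of~$\wp$, so~$f$ has period lattice $i\sqrt{3}\,\Lambda$, which sits in~$\Lambda$ with index~$3$ (cf.\ the footnote to the statement). An index-$3$ cover cannot be undone by a rational map.

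Your fallback via homogeneity does not repair this: with $\lambda=i\sqrt{3}$ one has $\lambda^6=-27$, so $\wp(\lambda u;0,-4)=\lambda^{-2}\wp(u;0,108)$, which lands on different invariants rather than returning to $(0,-4)$. The factor $i\sqrt{3}$ is not a cosmetic rescaling; it is complex multiplication by $2e^{i\pi/3}-1$ on the hexagonal lattice, i.e.\ a degree-$3$ isogeny, and that is precisely what relates $f$'s lattice to $\wp$'s. The paper sidesteps the whole inversion problem by arguing in the opposite direction: it posits the right-hand side of~\eqref{wconnex}, checks the initial value at $z=0$, and verifies directly that this candidate satisfies the transformed functional equation $\tfrac{1}{i\sqrt{3}}\partial_z F(z)=-F(-z)^2$, a computation that reduces, via $\wp'^2=4\wp^3+4$ and its derivative $\wp''=6\wp^2$, to the vanishing of a single polynomial in~$\wp,\wp'$. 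If you want a constructive derivation rather than a verification, you would have to bring in the $3$-isogeny explicitly.
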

\noindent
\begin{proof}
We first establish the expression~\eqref{perio2} of the real period of $\wp(z)$,
only making use of the most basic properties of elliptic functions~\cite[Ch.~XX]{WhWa27}.
Let us denote  temporarily the \emph{real half-period} by~$\varpi$. 
By general properties of elliptic functions\footnote{%
	We have $\wp(\varpi)=0$ since $\wp'$ is odd; hence $\wp(\varpi)$ must be a root
	of the third-degree polynomial associated with~$\wp$; here, $\wp(\varpi)=-1$.
}, we have $\wp'(\varpi)=0$, while $\wp(\varpi)$ is a real root of $4w^3+4=0$;
that is, $\wp(\varpi)=-1$. Thus, since $\wp$ is the inverse of an elliptic integral, we must have
\[\def\B{\operatorname{B}}\renewcommand{\arraystretch}{1.9}
\begin{array}{lll}
\varpi&=& \ds \int_{-1}^\infty \frac{dw}{\sqrt{4w^3+4}}
=\frac12\int_{-1}^0 \frac{dw}{\sqrt{w^3+1}}+\frac12\int_0^\infty \frac{dw}{\sqrt{w^3+1}}\\[1.5truemm]
&=&\ds \frac{1}{6}\B\left(\frac13,\frac12\right)+\frac{1}{6}\B\left(\frac13,\frac16\right)=
{2^{-7/3}3^{1/2}}{\pi}^{-1}\Gamma\left(\frac13\right)^3,
\end{array}
\]
as shown by the changes of variables $t=-w^3$ and $u=w^{3}$ (respectively) in 
the last two integrals of the first line, followed by Eulerian Beta function evaluations~\eqref{betadef}.
We henceforth denote the quantity~$\varpi$ by~$3r$.

The function $f( i\sqrt{3}z)$ is determined by
$f(0)=1$ and by the functional equation deduced from~\eqref{funeq}:
\begin{equation}\label{z1}
\frac{1}{ i\sqrt{3}}\frac{d}{dz}f( i\sqrt{3}z)=-f(- i\sqrt{3}z)^2\,.
\end{equation}
We proceed to verify~\eqref{wconnex}. Since $\wp'(3r)=0$ and~$\wp(3r)=-1$, we first have
$$\frac{-\wp'(3r)-2 i\sqrt{3}}{2 i\sqrt{3}\wp(3r)}=f(0)=1\,.$$
In view of~\eqref{z1}, the equality~\eqref{wconnex} then reduces to proving the identity
\begin{equation}\label{iszero}
\frac{1}{ i\sqrt{3}}\left(\frac{-\wp'(z+3r)-2 i\sqrt{3}}{2 i\sqrt{3}\wp(z+3r)}\right)'+
\left(\frac{-\wp'(-z+3r)-2 i\sqrt{3}}{2 i\sqrt{3}\wp(-z+3r)}\right)^2=0\,.
\end{equation}
Since $\wp(z)$ is an even  function, which is $6r-$periodic,
the left-hand side of~\eqref{iszero} can be 
put under the rational form~$A/B$, where the numerator~$A$ involves $\wp,\wp',\wp''$ at~$z+3r$.
Writing $\wp,\wp',\wp''$ for $\wp(z+3r),\wp'(z+3r),\wp''(z+3r)$, it remains to
verify that 
$$A=-2\wp''\wp+2(\wp'+2 i\sqrt{3})\wp'+\wp'^2-4 i\sqrt{3}\wp'-12$$
vanishes identically. Derivation of the differential equation 
$\wp'^2=4\wp^3+4$ for $\wp$ yields $\wp''=6\wp^2$
and we indeed obtain
$$A=-12\wp^3+3\wp'^2-12=0\,,$$
which concludes the proof of the statement.
\end{proof}

\section{\bf Lattice-sum expressions for the pseudo-factorials $\alpha_n$} \label{lat-sec}
From the Dixonian  as well as the Weierstra{\ss} connections discussed in
the previous section,
expressions of the $\alpha_n$ as \emph{lattice sums} can be developed.

\begin{theorem}\label{lat-thm}
The pseudo-factorials are expressible as lattice sums involving
a twelfth root of unity: with $\rho=2r\sqrt{3}$ and~$r$ as in Eq.~\eqref{perio2}, one has, for any~$n\ge2$:
\begin{equation}\label{latsum}
\alpha_n = - \frac{n!}{\rho^{n+1}}\sum_{\lambda,\mu\in\mathbb{Z}}
\frac{\zeta^{8\lambda+4\mu}}{\left[\left(\lambda-\frac12\right)\zeta
+\left(\mu-\frac12\right)\zeta^{-1}\right]^{n+1}} \,,
\qquad \zeta:=e^{i\pi/6}.
\end{equation}
\end{theorem}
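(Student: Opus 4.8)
The plan is to derive the lattice sum by combining Theorem~\ref{weier-thm} with the classical Mittag--Leffler (partial fraction) expansion of the Weierstra{\ss} function over its period lattice, then extracting Taylor coefficients at the origin. First I would rewrite \eqref{wconnex} in the form
\[
f(i\sqrt3\, z)=\frac{-1}{2i\sqrt3}\,\frac{\wp'(z+3r)+2i\sqrt3}{\wp(z+3r)},
\]
and use the differential equation $\wp'^2=4\wp^3+4$ together with $\wp(3r)=-1$, $\wp'(3r)=0$ to recognize the right-hand side as essentially $\sm$ of a shifted argument (via Theorem~\ref{dix-thm}), or, more directly, to write it as a single elliptic function of $z$ with a simple pole structure that I can read off. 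The key point is that $h(z):=f(i\sqrt3\,z)$ is an elliptic function with respect to the appropriate lattice, so it equals a finite sum of $\wp$'s and $\wp'$'s, or — more usefully for coefficient extraction — it has a partial-fraction expansion $h(z)=\sum_\omega c/(z-\omega)+\cdots$ over the poles~$\omega$.

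Next I would identify the pole lattice explicitly. Theorem~\ref{weier-thm} tells us the period lattice of $\wp(\cdot;0,-4)$ is generated (up to the index-$3$ subtlety flagged in the footnotes) by $6r$ and $6re^{i\pi/3}$, and that the shortest lattice vectors are the $\pm 2i\sqrt3\, r\, \zeta^{2k}$. Setting $\rho=2r\sqrt3$, the relevant points are of the form $\bigl((\lambda-\tfrac12)\zeta+(\mu-\tfrac12)\zeta^{-1}\bigr)\rho$ for $\lambda,\mu\in\Z$ — i.e. a shifted hexagonal lattice — because the shift by $3r$ in the argument of $\wp$ translates the poles of $\wp$ (which sit on the lattice) onto a coset, producing the half-integer shifts $\lambda-\tfrac12,\mu-\tfrac12$. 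The twisting factor $\zeta^{8\lambda+4\mu}=e^{i\pi(4\lambda+2\mu)/3}$ is a cube root of unity depending on $(\lambda,\mu)\bmod 3$; this is exactly the kind of character that appears because $f$, unlike $\wp$, is not lattice-periodic but only \emph{quasi}-periodic — it picks up a nontrivial cubic multiplier under the three ``short'' translations, reflecting the order-$3$ sublattice. I would pin down this multiplier from the functional equation \eqref{funeq} (equivalently $f(z)^3+f(-z)^3=2$ and $\sm^3+\cm^3=1$): translating $z$ by a third of a period multiplies $\sm$ by a cube root of unity, and tracking this through \eqref{dixie} gives precisely the weights $\zeta^{8\lambda+4\mu}$.

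Then the extraction is mechanical: from the partial-fraction form $h(z)=\text{const}\cdot\sum_{\lambda,\mu}\dfrac{\zeta^{8\lambda+4\mu}}{z-\rho\bigl[(\lambda-\frac12)\zeta+(\mu-\frac12)\zeta^{-1}\bigr]}$ (valid for $n\ge2$, where convergence of the double sum is absolute and the constant and any polynomial part are killed), expand each summand as a geometric series in $z$ around $z=0$, collect the coefficient of $z^n$, and substitute $z\mapsto z/(i\sqrt3)$ to undo the rescaling in $f(i\sqrt3 z)$; the factor $i\sqrt3$ raised to $n+1$ combines with $\rho$ to give the stated $\rho^{n+1}$ in the denominator (using $\rho = 2r\sqrt3$ and matching powers of $i$), and $n!$ enters because $f$ is the \emph{exponential} generating function, so $\alpha_n = n!\,[z^n]f(z)$. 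The overall normalization constant (and the sign) is fixed by checking one low-order coefficient, say $\alpha_2=-2$, against the initial values \eqref{inivals} — and the hypothesis $n\ge2$ is exactly what makes the naive termwise expansion legitimate, since for $n\le1$ the sum would diverge and one would need to symmetrize.

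The main obstacle will be the bookkeeping of the cubic multiplier and the exact shape of the shifted lattice: one must correctly reconcile the ``long'' period lattice of $\wp$ (generated by $6r$-type vectors) with the ``short'' hexagonal lattice of index $3$ appearing in \eqref{latsum}, keep track of which coset the poles of $\wp(z+3r)$ land in, and verify that the character $\zeta^{8\lambda+4\mu}$ is genuinely the multiplier of $f$ under the corresponding translations rather than an artifact. Once the right quasi-periodicity statement for $f$ is in hand, everything else is a routine Mittag--Leffler expansion plus coefficient extraction, with the constants checked numerically against \eqref{inivals}.
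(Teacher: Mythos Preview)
Your overall strategy matches the paper's proof: pass through Theorem~\ref{weier-thm} to the Weierstra{\ss} form, determine the pole and residue structure of the resulting elliptic function, apply the partial-fraction expansion~\eqref{classell} for simple-pole elliptic functions, and extract Taylor coefficients (with $n\ge2$ making the regularizing terms irrelevant). The paper works with the auxiliary function $H(z)=\dfrac{-\wp'(z)-2i\sqrt3}{2i\sqrt3\,\wp(z)}$, locates its three simple poles per fundamental domain at $z=0$ and $z=\pm 2r$, computes the residues there by direct local expansion (Equations~\eqref{resh0} and~\eqref{resh1}), and then applies the affine change of variable back to~$f$.

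The one substantive point concerns your derivation of the twist $\zeta^{8\lambda+4\mu}$. You attribute it to a ``quasi-periodicity'' of~$f$ under the short translations, but this is not accurate as stated: $f$ is a genuine elliptic function, fully periodic with respect to the \emph{primary} lattice generated by $3r(-1\pm i\sqrt3)$, and picks up no scalar multiplier under any translation. The cube-root character arises instead because each fundamental domain of that lattice contains \emph{three} simple poles (at $-3r$ and $\pm i\sqrt3\,r$ for~$f$; equivalently $0,\pm 2r$ for~$H$), whose residues happen to be related by multiplication by $\zeta^4=e^{2\pi i/3}$ --- they all have modulus $1/\sqrt3$ and arguments spaced by $2\pi/3$. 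The paper simply computes these three residues directly from the local expansions of~$\wp$ and~$\wp'$, which is shorter and cleaner than trying to extract a transformation law from the Dixonian picture. Once the three residues are known, the $3$-colouring of the secondary (pole) lattice and the formula~\eqref{latsum} follow exactly as you outline.
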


\noindent
The formula~\eqref{latsum} 
implies
explicit asymptotics for~$(\alpha_n)$, namely,
\[
\frac{\alpha_{2\nu }}{(2\nu )!} \sim   (-1)^\nu  3^{-\nu } r^{-2\nu -1} 
\quad\hbox{and}\quad
\frac{\alpha_{2\nu +1}}{(2\nu +1)!}
 \sim   (-1)^{\nu +1} 3^{-\nu -1}r^{-2\nu -2} , 
\]
once only the relevant dominant poles  are retained.
This explains in particular the regular sign pattern ``{\sf +\,-\,-\,+}'' observed
in~\eqref{inivals}.

\begin{proof}
Let~$G(z)$ be an elliptic (i.e., meromorphic, doubly periodic) function that 
has only simple poles. Let~$\Lambda=\Z\omega_1\oplus \Z\omega_2$ be its lattice 
of periods and let~$\cal S_0$ be the set of poles
contained in a fundamental domain of~$\Lambda$. Then, if
$0\not\in\cal S_0$, one has
\begin{equation}\label{classell}
G(z)=G(0)+zG'(0)+\sum_{\omega\in\Lambda}
\sum_{\rho\in\cal S_0}
r_{\rho} 
\left[\frac{1}{z-(\rho+\omega)}+\frac{1}{\rho+\omega}+\frac{z}{(\rho+\omega)^2}\right],
\end{equation}
where $r_s$ represents the residue of~$G(z)$ at~$z=s$.
Theorem~\ref{weier-thm} and the formula~\eqref{classell}
show that it suffices,
up to an affine transformation, to work out the singular structure of
\[
H(z)=\frac{-\wp'(z)-2i\sqrt{3}}{2i\sqrt{3}\wp(z)}
\]
in order to deduce the partial fraction expansion of~$f(z)$, from which 
the lattice sum~\eqref{latsum} expression will result.

The function~$\wp(z)\equiv\wp(z;0,-4)$ has lattice of
periods
\begin{equation}\label{lath}
(\Z e^{i\pi/6}\oplus \Z e^{-i\pi/6})2\sqrt{3}r
\end{equation}
and~$H(z)$  has  simple poles at~$z=0$ and~$z=\pm  2r$. Given
the series expansion  $\wp(z)=z^{-2}+O(z^2)$ of~$\wp$ around~0,  we
find the  residue  of~$H(z)$ at~$0$ to be
\begin{equation}\label{resh0}
\Res(H(z);z=0)=\frac{-i}{\sqrt{3}}.
\end{equation}
By the discussion         following~\eqref{ell0},   we   also            have
$(\wp(\pm2r),\wp'(2r))=(0,\pm2)$.       Moreover,   we  have
$(\wp(3r),\wp'(3r))=(-1,0)$                                        and
$(\wp'(3r),\wp''(3r))=(\wp'(3r),6\wp(3r)^2)=(0,6)$,     hence
$\wp'(2r)=-2$ and $\wp'(4r)=\wp'(-2r)=2$. The expansion of~$\wp(z)$ around~$2r$ is thus
\[
\wp(z+2r)=-2z+2z^4-\frac{8}{7}z^7+\frac{4}{7}z^{10}+\cdots,
\]
from which a simple computation provides the residue of~$H(z)$ at~$2r$,
and, similarly, the residue at~$-2r$:
\begin{equation}\label{resh1}
\Res\left(H(z);z=2r\right)=\frac{3+i\sqrt{3}}{6},
\qquad
\Res\left(H(z);z=-2r\right)=\frac{-3+i\sqrt{3}}{6},
\end{equation}

\begin{figure}\small
\vspace*{2truemm}
\begin{center}
\setlength{\unitlength}{0.95truecm}
\begin{picture}(6,6)
\thicklines
\put(0,0){\Img{5.7}{lattice-rich}}
\put(-0.4,3.0){\vector(1,0){7.1}}   
\put(6.15,3.2){$\Re(z)$}
\put(3.75,0.0){\vector(0,1){6.0}}  
\put(3.77,0.0){\vector(0,1){6.0}}  
\put(2.9,5.8){$\Im(z)$}
\end{picture}
\qquad
\setlength{\unitlength}{0.8truecm}
\raisebox{5.15truemm}{\begin{picture}(6,6.4)
\put(0.5,3){\vector(1,0){5.1}}   
\put(5.9,3){$\Re(z)$}
\put(3,-0.2){\vector(0,1){6.6}}  
\put(2.0,6.1){$\Im(z)$}
\put(3,3){\circle*{0.15}} 
\put(3.1,3.1){$0$}
\put(1.3,3){\circle*{0.2}} 
\put(1.3,3){\circle{0.35}}
\put(0.45,3.1){$-3r$}
\put(4.7,3){\circle*{0.2}}
\put(4.8,3.1){$+3r$}
\thicklines
\put(1.3,3){\line(3,5){1.7}}
\put(1.3,3){\line(3,-5){1.7}}
\put(4.7,3){\line(-3,5){1.7}}
\put(4.7,3){\line(-3,-5){1.7}}
\put(3,5.83){\circle*{0.2}}
\put(3.4,5.83){$+i3\sqrt{3}r$}
\put(3,0.17){\circle*{0.2}}
\put(3.4,0.17){$-i3\sqrt{3}r$}
\put(3,3.94){\circle*{0.2}}
\put(3,3.94){\circle{0.35}}
\put(3.4,3.94){$+i\sqrt{3}r$}
\put(3,2.06){\circle*{0.2}}
\put(3,2.06){\circle{0.35}}
\put(3.4,2.06){$-i\sqrt{3}r$}
\end{picture}}
\end{center}
\caption{\label{lat-fig}\small
\emph{Left:} the ``primary'' lattice of periods [thick lines] of~$f(z)$,
where a fundamental domain is obtained by the union of two 
adjacent equilateral triangles (one grey, 
one white); the poles (three per fundamental domain)
are represented by small discs and form a ``secondary'' hexagonal lattice [thin lines]. 
\emph{Right}: a diagram showing
the three poles (circled) of a fundamental domain of~$f(z)$ around the origin.
}
\end{figure}

Now, by~\eqref{lath}, \eqref{resh0}, and~\eqref{resh1},
the singular structure of~$H(z)$ is entirely known. 
Since $f(iz\sqrt{3})=H(z+3r)$,
 an affine transformation 
(composed of a translation, a rotation, and a dilation) 
provides the singular structure of~$f(z)$ itself---we 
abbreviate the discussion, which is routine.
As represented in Figure~\ref{lat-fig},
the lattice of periods~$\Lambda$ of~$f(z)$
is a hexagonal lattice with generators $3r(-1\pm i\sqrt{3})$;
we may call it the ``primary'' hexagonal lattice.
There are three poles of~$f(z)$ in the fundamental domain, at~$-3r$ and $\pm i\sqrt{3}r$,
which, upon translation by~$\Lambda$, generate a ``secondary'' hexagonal lattice.
The residue of~$f(z)$ (deduced from~\eqref{resh0} and~\eqref{resh1}),
is then found to be of the form $\zeta^{8\lambda+4\mu}$ 
at a point of the form $2r\sqrt{3}[(\lambda-1/2)\zeta+(\mu-1/2)\zeta^{-1}]$,
which corresponds to a 3--colouring of the secondary hexagonal lattice
(since~$\zeta^4$ is a third root of unity).
The corresponding partial fraction decomposition~\eqref{classell}
results for~$f(z)$. Finally, the fact\footnote{
Notation: we let $[z^n]f$ represent the coefficient of~$z^n$ in the formal power series 
or analytic function~$f$.} that, for $n\ge2$,
\[
[z^n] \left(\frac{1}{z-a}+\frac{1}{a}+\frac{z}{a^2}\right) = -\frac{1}{a^{n+1}}.
\]
yields the stated lattice-sum formula for~$\alpha_n$.
\end{proof}

The sum~\eqref{latsum} establishes the pseudo-factorials
as a two-dimensional analogue of Bernoulli and Euler numbers,
one that is relative to the hexagonal lattice. 
It might be of interest to investigate systematically continued fractions 
relative to other hexagonal lattice sums. It is worthy of note that 
arithmetic properties of analogous 
``lemniscatic'' sums,  relative to the square lattice, have been studied by 
Hurwitz~\cite{Hurwitz97,Hurwitz99}.

\section{\bf The Stieltjes--Rogers addition theorem and continued fractions} \label{sradd-sec}

This section serves to     introduce the basic technology  needed   to
develop an explicit continued  fraction   expansion from an   addition
theorem of a    suitable form.  An experimental  approach specialized to
pseudo-factorials follows, in  Section~\ref{experadd-sec}.
We can then reap the   crop  in Section~\ref{proofadd-sec} and finally  prove our
main continued fraction result (Theorem~\ref{main-thm}).

Stieltjes and Rogers independently discovered 
that the continued fraction expansion of an ordinary generating function,
$\Phi(z)=\sum \phi_nz^n$, is closely related to \emph{addition formulae} satisfied by
the corresponding exponential generating function, $\phi(z)=\sum \phi_n z^n/n!$.
First, a definition.

\begin{definition}
Let $\phi(z)=1+\sum_{n\ge1} \phi_n z^n/n!$ be a formal power series.
It is said to satisfy an \emph{addition formula of the Stieltjes--Rogers type} if
there exist nonzero constants $\omega_1,\omega_2,\ldots$ and formal power series
$\varphi_0(z),\varphi_1(z),\ldots$, such that
\begin{equation}\label{sradd}
\phi(x+y)=\varphi_0(x)\varphi_0(y)+\omega_1\varphi_1(x)\varphi_1(y)+\omega_2\varphi_2(x)\varphi_2(y)+
\cdots,
\end{equation}
where $\varphi_r(z)$ has valuation~$r$ and is normalized by
$\varphi_r(z)=(z^r/r!)+O(z^{r+1})$.
\end{definition}
In~\eqref{sradd}, the valuation condition on $\varphi_r$ is essential,
the normalization $\phi_r(z)\sim z^r/r!$ being a mere convenience
for what follows. 

The  addition    formula gives rise  to an algorithm  
for computing the $\varphi_\ell(z)$, knowing~$\phi(z)$, either
symbolically or via some series expansion.
First, setting $y=0$ in the addition formula
shows that $\phi(z)$ must be equal to $\varphi_0(z)$
(this makes use of the normalization~$\varphi_0(0)=1$).
Next, assume  that the  functions   $\varphi_0,\ldots,\varphi_{\ell-1}$ 
and the coefficients $\omega_1,\ldots,\omega_{\ell-1}$ have already  been
determined.  Then,   
by  differentiating~$\ell$  times   the  addition
formula~\eqref{sradd} with respect to~$y$, then setting~$y=0$, one finds 
\[
 \partial_x^\ell \phi(x)-\sum_{j=0}^{\ell-1}\omega_{j}
\varphi_j(x)  \left[\partial_y^\ell \varphi_j(y)\right]_{y=0}
=\omega_\ell\varphi_\ell(x).\] 
Upon comparing the coefficient of
$x^\ell$ in the Taylor     expansions of both sides,  
we see that \emph{at  most}     one nonzero
coefficient~$\omega_\ell$   can   satisfy the equation,
given the normalization $\varphi_\ell(x)=x^\ell/\ell!+O(x^{\ell+1})$.
 Once  this choice has
been fixed,  then $\varphi_{\ell}(x)$  is uniquely
determined as a linear combination
of the previous~$\varphi_j(z)$, together with derivatives of~$\phi$,
and the process can continue. 
This construction also determines 
the broad class of functions in which the $\varphi_\ell(z)$ live:
\emph{each $\varphi_\ell(z)$ belongs to the vector space generated over~$\C$ 
by the first  derivatives $\phi,\phi^{(1)},\ldots,\phi^{(\ell)}$ of the function~$\phi(z)$}.
 This algorithm, albeit suboptimal from a computational point of view, 
permits us to experiment with addition formulae relative to the generating function of any given 
number sequence~$(\phi_n)$, a fact that  will  prove especially valuable in Section~\ref{experadd-sec}.

Addition formulae of the Stieltjes--Rogers type
are logically equivalent to
continued fraction expansions
as expressed by the following central theorem
originally due to Stieltjes~\cite{Stieltjes89} and Rogers~\cite{Rogers07};
see also Perron~\cite[p.~133]{Perron57} and Wall~\cite[p.~204]{Wall48}.

\begin{theorem}[Stieltjes--Rogers] \label{sradd-thm}
 Let the \emph{exponential} generating function~$\phi(z)=1+\sum_{n\ge1} \phi_n z^n/n!$ 
satisfy an addition
formula of the form~\eqref{sradd}. Then, the corresponding \emph{ordinary} generating function 
$\Phi(z)=1+\sum_{n\ge1}\phi_n z^n$ admits a 
Jacobi-type continued fraction expansion\footnote{%
	Such an expansion is diversely known as a Jacobi fraction,
	a $J$--fraction, or an associated continued fraction.
},
\begin{equation}\label{Jfrac}
\Phi(z)=\cfrac{1}{1-c_0z-\cfrac{a_1z^2}{1-c_1z-\cfrac{a_2z^2}{\ddots}}},
\end{equation}
where the coefficients are determined by
\[
a_j=\frac{\omega_j}{\omega_{j-1}}\quad (j\ge1),
\qquad
c_j=\varphi_{j,j+1}-\varphi_{j-1,j}\quad (j\ge0).
\]
There, $\varphi_{j,k}=k![z^k]\varphi_j(z)$, $\varphi_{-1,k}=0$, and $\omega_0=1$.
\end{theorem}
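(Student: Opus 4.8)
The plan is to establish the Stieltjes–Rogers theorem by a direct bijective/combinatorial argument passing through lattice paths, which is the modern way to see why addition formulae and Jacobi fractions are equivalent. First I would recall the \emph{Flajolet combinatorial interpretation} of Jacobi fractions: the $n$th coefficient $\phi_n$ of the expansion~\eqref{Jfrac} equals the weighted sum over all Motzkin paths of length~$n$, where a level step at height~$k$ carries weight~$c_k$, a down-step from height~$k$ to~$k-1$ carries weight~$a_k$, and up-steps carry weight~$1$. Dually, the addition formula~\eqref{sradd} can be unfolded: iterating it $m$ times expresses $\phi(z_1+\cdots+z_m)$ as a sum over sequences $(r_1,\dots,r_m)$ of $\omega_{r_1}\cdots$ (with appropriate telescoping of the $\omega$'s) times a product of the $\varphi_{r_i}(z_i)$, and extracting the coefficient of $z_1\cdots z_m$ (the "fully labelled" specialization, corresponding to $n$ with all parts~$1$, then summing over compositions) produces exactly a sum over paths whose steps record the successive indices~$r_i$. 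The heart of the matter is to check that the two path models coincide step by step.

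Concretely, the key steps in order are as follows. (1) Show that the normalization $\varphi_r(z)=z^r/r!+O(z^{r+1})$ together with the valuation-$r$ condition forces, upon iterating~\eqref{sradd} and reading off coefficients, that consecutive indices in the admissible sequences change by at most~$1$ in a suitable telescoped sense — i.e.\ the sequences are in bijection with Motzkin-type paths. This is where the essential hypothesis that $\varphi_r$ has valuation exactly~$r$ is used. (2) Identify the weight of a down-step from height~$j$ to~$j-1$ with $\omega_j/\omega_{j-1}$: this comes from the ratio structure in the iterated addition formula, since an index that goes up by one at stage~$k$ and comes back down contributes an $\omega_j$ on the way and the bookkeeping of the $\omega$'s telescopes to the quotient $a_j=\omega_j/\omega_{j-1}$. (3) Identify the weight of a level step at height~$j$: using the differentiation/extraction recursion spelled out in the text, $\varphi_j(z)$ is a combination of $\phi,\dots,\phi^{(j)}$, and extracting the relevant coefficient yields $c_j=\varphi_{j,j+1}-\varphi_{j-1,j}$ where $\varphi_{j,k}=k![z^k]\varphi_j(z)$; the subtraction of $\varphi_{j-1,j}$ reflects the overcounting that occurs when one passes from the raw coefficient of $\varphi_j$ to the genuine "flat" contribution at that height. (4) Assemble (1)–(3) to conclude that the ordinary generating function $\sum\phi_n z^n$ has the path interpretation matching the continued fraction~\eqref{Jfrac}, whence the stated formulas for $a_j$ and $c_j$, with the boundary conventions $\varphi_{-1,k}=0$, $\omega_0=1$.

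An equivalent and perhaps cleaner route, which I would mention as an alternative, is purely algebraic: define $\Phi_{(\ell)}(z)$ to be the ordinary generating function attached to the "shifted" system starting at level~$\ell$ (built from $\varphi_\ell,\varphi_{\ell+1},\dots$ and $\omega_\ell,\omega_{\ell+1},\dots$), and prove directly from~\eqref{sradd} a recurrence of the shape $\Phi_{(\ell)}(z)=1/(1-c_\ell z-a_{\ell+1}z^2\Phi_{(\ell+1)}(z))$. To get this, one writes the addition formula with $y=z$ small, differentiates once to relate $\Phi_{(\ell)}$ to $\Phi_{(\ell+1)}$ via the action of $\partial_y$ on~\eqref{sradd} evaluated at $y=0$, and uses the normalization of the $\varphi_r$ to read off that exactly the height-$\ell$ level weight $c_\ell$ and the $a_{\ell+1}$ factor appear. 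Unfolding this recurrence gives~\eqref{Jfrac}. This is essentially the argument in Perron and Wall cited in the statement.

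The main obstacle, in either route, is the careful bookkeeping in step~(3): disentangling the "flat" contribution $c_j$ at a given height from the contributions that immediately go up and back down requires tracking how the coefficient $[z^{k}]\varphi_j(z)$ splits according to whether a step stays at level~$j$ or makes an excursion, and this is exactly what produces the correction term $-\varphi_{j-1,j}$. In the combinatorial model this is the step where one must argue that the weight generating function at a fixed height factors correctly; in the algebraic model it is the step where, after differentiating~\eqref{sradd}, one must isolate the first-order term in~$y$ cleanly. Everything else — the telescoping giving $a_j=\omega_j/\omega_{j-1}$, the base cases, and the convergence as formal power series — is routine, given the results already available in the excerpt and the standard correspondence between $J$-fractions and weighted Motzkin paths.
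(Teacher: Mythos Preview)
The paper does not actually supply a proof of Theorem~\ref{sradd-thm}: it is stated as a classical result attributed to Stieltjes and Rogers, with pointers to Perron~\cite[p.~133]{Perron57} and Wall~\cite[p.~204]{Wall48}, and the text then immediately moves on to the illustrative example of~$\sec(z)$. So there is no ``paper's own proof'' to compare your proposal against.

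That said, both routes you outline are standard and essentially correct in spirit. Your algebraic alternative---proving a recurrence $\Phi_{(\ell)}(z)=1/(1-c_\ell z-a_{\ell+1}z^2\Phi_{(\ell+1)}(z))$ by differentiating~\eqref{sradd} in~$y$ and reading off low-order terms---is precisely the classical argument in the cited references, and is the cleanest way to see where the formulae $a_j=\omega_j/\omega_{j-1}$ and $c_j=\varphi_{j,j+1}-\varphi_{j-1,j}$ come from. Your first route via the Flajolet interpretation of $J$-fractions by weighted Motzkin paths is a genuinely different, more modern packaging; it works, but as you yourself flag, the bookkeeping in step~(3) is where the real content lies, and your description there (``iterating the addition formula $m$ times and extracting $z_1\cdots z_m$'') is still rather impressionistic: to make it rigorous you would need to state precisely what ``iterating''~\eqref{sradd} means (it is a two-variable identity, not a semigroup law) and then verify carefully that the resulting sum over index sequences really is indexed by Motzkin paths with the claimed weights. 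None of this is wrong, but in its present form it is a plan rather than a proof.
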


%

As an illustration, following Stieltjes and Rogers, we examine the case of
\[
\phi(z)=\sec(z)\equiv\frac{1}{\cos(z)}.\]
Each $\varphi_k(z)$ (provided it exists) must then \emph{a priori}
be of the form 
$\sec(z)P_k(\tan(z))$, where $P_k$ is a polynomial satisfying
$\deg P_k=k$. In a simple case like this,
classical trigonometric identities yield
\[
\sec(x+y)=\frac{1}{\cos(x)\cos(y)-\sin(x)\sin(y)}
=\sum_{k\ge0} \sec(x)\tan(x)^k\cdot \sec(y)\tan(y)^k,\]
which, in normalized form, becomes
\begin{equation}\label{secadd}
\sec(x+y)=\sum_{k\ge0} (k!)^2 \left(\sec(x)\frac{\tan(x)^k}{k!}\right)
\cdot \left(\sec(y)\frac{\tan(y)^k}{k!}\right).
\end{equation}
With $E_{n}=n![z^n]\sec(z)$ an Euler number, Theorem~\ref{sradd-thm} then 
provides the continued fraction expansion:
\[
\sum_{n\ge0} E_n z^n =\cfrac{1}{1-\cfrac{1^2\cdot z^2}{1-\cfrac{2^2\cdot z^2}
{\ddots}}},\]
where the coefficients $1^2,2^2,3^2,\ldots$, are obtained here as quotients of
consecutive squared factorials. The absence of linear terms
reflects the parity of~$\sec(z)$.

\section{\bf Experimental determination  of the addition formula for $f(z)$}\label{experadd-sec}

Section~\ref{sradd-sec} has shown that, in order to approach the construction of a continued
fraction expansion for pseudo-factorials, we need to develop a suitably constrained 
addition formula for their exponential generating function~$f(z)$, which is elliptic.
We proceed here in an experimental manner in order to infer the likely shape
of such an addition formula\footnote{%
	This section is not, strictly speaking, necessary. It could
	have been replaced by the shorter but somewhat obscure
	formulation: \emph{``Crystal ball gazing revealed to us
	the addition formula~\eqref{addf}}.''
}. Once this had been done, the proof of our main continued fraction
reduces to purely mechanical verifications to be carried out in the next section.

A first idea that comes to mind is to look for an addition formula
of a kind similar to the secant case~\eqref{secadd}, namely,
\begin{equation}\label{simpadd}
\phi(x+y)=\phi(x)\phi(y)\Psi(\sigma(x)\cdot\sigma(y)),
\end{equation}
for some function (or power series) $\Psi(w)$. 
However, all such formulae can only arise from
a class of special functions
that comprises \emph{five} parametrized subclasses,
of which prototypes are
\[
\sec(z),\quad \frac{1}{1-z},\quad e^{e^z-1},\quad e^{z^2/2},\quad
\frac{1}{2-e^z}.
\]
(This is a rephrasing of a classification of orthogonal
polynomial systems due to Meixner~\cite{Chihara78,Meixner34}.)
Obviously, elliptic functions are not amongst this group.

Another source of   inspiration is  a  continued fraction  
relative to  elliptic functions, which 
is also due to Stieltjes and Rogers.
 With $\sn,\cn,\dn$  the Jacobian elliptic functions,
as classically defined, where we leave the modulus~$k$ implicit, we have
\begin{equation}\label{cnadd}
\cn(x+y)=\frac{\cn x \cn y -\sn x \sn y \dn x \dn y}
{1-k^2\sn^2 x \sn^2 y},
\end{equation}
see for instance~\cite[p.~497]{WhWa27}.
This can be put into an equivalent Stieltjes--Rogers form
(up to normalization), namely,
\begin{equation}\label{cnxy}
\cn(x+y)=\cn x\cn y \left(1-\sn x  \dn x \sn y \dn y
+k^2 \sn x \cn^2 x \dn x \sn y \cn^2 y \dn y+\cdots\right),
\end{equation}
corresponding to the continued fraction expansion
\[
\sum_{n\ge0} \cn_n z^n=\cfrac{1}{1-\cfrac{1^2\cdot z^2}{1-\cfrac{2^2 k^2\cdot z^2}{
{1-\cfrac{3^2\cdot z^2}{\ddots}}}}},
\]
where $\cn_n:=n![z^n]\cn (z)$.

We now turn to the continued fraction expansion relative to pseudo-factorials
 which, by Theorem~\ref{sradd-thm}, involves determining the right addition formula
for~$f(z)$.
Based on experiments under the Maple system as well as on induction from 
the secant~\eqref{secadd} and  Jacobian~\eqref{cnadd} cases,
we started searching for an addition formula of the form
\begin{equation}\label{conj}
f(x+y)=f(x)f(y)\Psi(\sigma(x)\sigma(y))
+h(x)h(y)\Xi(\tau(x)\tau(y)),
\end{equation}
for some power series $\Psi,\Xi,h,\sigma,\tau$,
with (at least) $\Psi_0=\Xi_0=1$, and $\sigma(x),\tau(x),h(x)$ all 
being $O(x)$.
Since some binary pattern is present in the continued fraction, it is natural 
further to suppose that
$\sigma(x)=O(x^2)$, $\tau(x)=O(x^2)$, which then
corresponds to an ``odd-even'' addition formula,
\begin{equation}\label{conj0}
f(x+y)=f(x)f(y)+h(x)h(y)+\Psi_1 f(x)f(y)\sigma(x)\sigma(y)
+\Xi_1 h(x)h(y)\tau(x)\tau(y)+\cdots\,,
\end{equation}
where $\Psi_m=[w^m]\Psi(w)$ and~$\Xi_m=[w^m]\Xi(w)$.
In the notations of ~\eqref{sradd}, 
we thus hope for an addition formula of the form
\begin{equation}\label{conj1}
\varphi_{2j}(x)\propto f(x) \sigma(x)^j,
\qquad
\varphi_{2j+1}(x)\propto h(x) \tau(x)^{j-1},
\end{equation}
where $a(x)\propto b(x)$ means that the ratio $a(x)/b(x)$ is a constant.

The pleasant feature of the conjectured expansions~\eqref{conj} and~\eqref{conj1}
is that their plausibility can be effectively tested.
Indeed, from the previous section, we have available an algorithm that 
can determine the (unique)~$\varphi_j(x)$ 
corresponding to $\phi(z)\equiv f(z)$,
this to any desired precision. It then suffices to check that
\[
\frac{\varphi_2(x)}{\varphi_0(x)}
\propto \frac{\varphi_4(x)}{\varphi_2(x)}  \propto\cdots,
\quad\hbox{and}\quad
\frac{\varphi_3(x)}{\varphi_1(x)}
\propto \frac{\varphi_5(x)}{\varphi_3(x)}  \propto\cdots\,.
\]
Verification of these relations for about a dozen of the $\varphi_j$
and till orders in the range 50--100 convinces us that 
we are on the right tracks. 

In fact, we found experimentally that $\sigma(z)=\tau(z)$ up to $O(z^{50})$,
the series starting as
\[
\sigma(z)=3\left(z^2-z^4+z^6-\frac{6}{7}z^8+\frac{5}{7}z^{10}-\cdots\right).
\]
Also, the function $h(z)$ that appears in~\eqref{conj0} must be proportional to~$\varphi_1(z)$
of the addition formula, whose expansion starts as $\varphi_1(z)=z-z^3+\frac14 z^4-\cdots$.
That function $\varphi_1(z)$ must itself, on general grounds, be a linear combination of $f(z)$ and
$f'(z)$ without constant term, so that
\[
\varphi_1(z)=-\frac{1}{3}\left(f(z)+f'(z)\right) \quad\hbox{and}\quad h(z)\propto \varphi_1(z).
\]
Finally, assuming the observed law of the
coefficients in the continued fraction to hold forever,
we can deduce the only possible shape of the $\Xi$ and $\Psi$ functions.
The function~$\sigma$ is then inferred on the basis of the fact that
$\varphi_2(z)\propto \varphi_0(z)\sigma(z)$ must be a linear combination of $f,f',f''$.
All in all, every ingredient of
an addition formula of type~\eqref{conj} is in place,
and we are eventually led to conjecturing an
addition formula for~$f(z)$
\begin{equation}\label{addf}
\left\{\begin{array}{l}
f(x+y)~=~\ds 
\frac{f(x)f(y)-\frac13 h(x)h(y)}
{1-\frac13\sigma(x)\sigma(y)}\\
h(z)~=~ f(z)+f'(z), \qquad 
\sigma(z)~=~1-f(z)f(-z),
\end{array}\right.
\end{equation}
which we shall establish in the next section.

\section{\bf Proof of the continued fraction expansion}\label{proofadd-sec}

At this stage, we know that establishing the continued fraction~\eqref{confraca}
relative to the ordinary generating function~$F(z)$ of pseudo-factorials
reduces to deciding the validity of the conjectured addition formula~\eqref{addf} for~
the exponential generating function~$f(z)$.
The proof we propose is a computer-assisted verification. 
As we shall explain, it only involves
routine algebraic manipulations\footnote{%
	The validity of intermediate steps is, in addition, easily cross-checked by
	means of Taylor series expansions.
}; namely,
rational function operations, normalizations, substitutions, as well as
multivariate polynomial divisions.  The calculations were performed
using the {\sc Maple} computer algebra engine (version~11). 
Without any attempt
at optimization (we purposely wanted our program to rely solely on the most basic 
algebraic operations), the mechanical verification
reduces to the mere execution of a few billion
machine instructions---currently, just a few seconds of elapsed time.

\begin{proposition}\label{add-prop}
The function $f(z)$ satisfies
the following addition formula:
\begin{equation}\label{mainadd}
f(x+y)=\frac{f(x)f(y)-\frac13(f(x)+f'(x))(f(y)+f'(y))}{1-\frac13
(1-f(x)f(-x))(1-f(y)f(-y))}.
\end{equation}
\end{proposition}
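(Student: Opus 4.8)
The plan is to sidestep any direct expansion of $f(x+y)$ — which would be circular — and instead pin down the right-hand side of~\eqref{mainadd} by an elementary translation property. Write that right-hand side as $R(x,y)=N(x,y)/D(x,y)$ with
\[
N(x,y)=f(x)f(y)-\tfrac13 h(x)h(y),\qquad D(x,y)=1-\tfrac13\sigma(x)\sigma(y),
\]
where $h(z)=f(z)+f'(z)$ and $\sigma(z)=1-f(z)f(-z)$ as in~\eqref{addf}. Since $D(0,0)=1$, the function $R$ is analytic near the origin. The argument then rests on two observations. First, $R(x,0)=f(x)$: indeed $h(0)=f(0)+f'(0)=1+(-1)=0$ and $\sigma(0)=1-f(0)^2=0$, so $N(x,0)=f(x)$ and $D(x,0)=1$. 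Second, $\partial_x R=\partial_y R$. Granting the second point, $R$ depends only on $x+y$ on the connected neighbourhood of the origin where $D\neq 0$; together with the first point this forces $R(x,y)=f(x+y)$ there, and the identity extends to all of $\C^2$ by analytic continuation, both sides being meromorphic.

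So the entire matter reduces to the identity $\partial_x R=\partial_y R$, which I would establish as a purely algebraic fact. From $f'(z)=-f(-z)^2$ one gets $f''(z)=-2f(z)^2f(-z)$, and hence $f'(x)$, $h(x)=f(x)-f(-x)^2$, $h'(x)=-f(-x)^2-2f(x)^2f(-x)$, $\sigma(x)=1-f(x)f(-x)$, $\sigma'(x)=f(-x)^3-f(x)^3$ are all polynomials, of degree at most three, in $a:=f(x)$ and $\bar a:=f(-x)$; similarly for the $y$-derivatives in terms of $b:=f(y)$, $\bar b:=f(-y)$. Substituting and clearing the common denominator $D^2$, the equality $\partial_x R=\partial_y R$ becomes the polynomial identity
\[
\bigl(\partial_x N-\partial_y N\bigr)\,D \;=\; N\,\bigl(\partial_x D-\partial_y D\bigr)
\]
to be checked in $\Q[a,\bar a,b,\bar b]$ modulo the ideal $I=(a^3+\bar a^3-2,\ b^3+\bar b^3-2)$, whose two generators are precisely identity~\eqref{E1} written at the arguments $x$ and $y$ — and, the curve $u^3+v^3=2$ being irreducible, these are the only relations among $a,\bar a,b,\bar b$, so the reduction modulo $I$ is faithful. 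One then verifies that both sides, reduced modulo $I$ by multivariate polynomial division, coincide; this is the (small) computer-assisted step.

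I do not expect a genuine obstacle: the experimental matching of Section~\ref{experadd-sec} to orders in the range $50$--$100$ essentially guarantees that the reduction returns $0$. The only point demanding care is bookkeeping — making sure the substitutions for $f',f'',h',\sigma'$ are the exact consequences of~\eqref{funeq}, and that the division remainder is genuinely $0$ rather than merely small in a truncated expansion. As an independent safeguard I would, in parallel, expand both sides of~\eqref{mainadd} as bivariate Taylor series (using~\eqref{alphadef} to generate the $\alpha_n$) and compare coefficients to a generous order. A heavier but conceptually parallel alternative would be to substitute the Weierstra{\ss} parametrization of Theorem~\ref{weier-thm}, turn~\eqref{mainadd} into a rational identity in $\wp,\wp'$ at three arguments, eliminate the third via the classical addition theorem for $\wp$ together with $\wp(3r)=-1$ and $\wp'(3r)=0$, and again finish by reduction modulo $\wp'^2=4\wp^3+4$; I would favour the self-contained route above.
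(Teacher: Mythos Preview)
Your approach is correct and takes a genuinely different route from the paper's. The paper works through the Weierstra{\ss} connection of Theorem~\ref{weier-thm}: it substitutes the explicit formula~\eqref{wconnex} for~$f$, invokes the classical addition theorem for~$\wp$ to express $f(x+y)$ rationally in $\wp,\wp'$ at the two shifted arguments, and then reduces the difference of the two sides of~\eqref{mainadd} modulo $\wp'^2=4\wp^3+4$ --- this is exactly the ``heavier alternative'' you sketch in your closing sentence. Your main argument, by contrast, never leaves the functional equation~\eqref{funeq}: the PDE characterization $\partial_x R=\partial_y R$ together with the boundary value $R(x,0)=f(x)$ reduces everything to a polynomial identity in the four symbols $f(\pm x),f(\pm y)$ modulo the Fermat-type relation~\eqref{E1}, with no appeal to Dixonian or Weierstra{\ss} forms. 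The ambient ring is simpler (four variables, two cubic relations in disjoint pairs, visibly a Gr\"obner basis), and your remark that the cubic $u^3+v^3=2$ is irreducible is precisely what is needed to know that the ideal~$I$ is prime, so that an identity holding analytically must reduce to zero modulo~$I$. What the paper's route buys is a direct parallel with the classical derivation of~\eqref{cnadd} and a computation that stays inside standard elliptic-function territory; what yours buys is self-containment and a smaller symbolic footprint. Both end in a mechanical verification of comparable size.
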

\begin{proof}

We can \emph{a priori} appeal to either the Dixon or the Weierstra{\ss} 
framework, and we have opted for the latter. The Weierstra{\ss} $\wp$--function, 
$\wp(z)\equiv \wp(z;0,-4)$ satisfies the two algebraic relations
\[
\begin{array}{lllll}
\wp'(z)^2 &=& \ds 4\wp(z)^3+4 &\qquad& \hbox{(DEF)}
\\\wp(u+v)&=& \ds \frac{1}{4}
\left(\frac{\wp'(u)-\wp'(v)}{\wp(u)-\wp(v)}\right)^2-\wp(u)-\wp(v)
 && \hbox{(ADD).}
\end{array}
\]
The first one (DEF) is the basic differential equation, which serves as \emph{definition}
of~$\wp$;
the second  one (ADD) is  the  familiar  \emph{addition} theorem of  elliptic
function theory~\cite[p.~441]{WhWa27}. Both are ``known'' to {\sc Maple};
both can be viewed as deterministic rewrite rules  permitting one to 
expand and simplify expressions involving~$\wp$.

Let~$6r$ be the fundamental constant (real period) of
Section~\ref{weier-sec}. We know that $(\wp(3r),\wp'(3r))=(-1,0)$.
The addition rule (ADD) combined with the expression of $f(z)$
stated in Theorem~\ref{weier-thm}, Equation~\eqref{wconnex},
then mechanically expresses $f(z)$ as a rational fraction
in~$\wp(Z)$ and~$\wp'(Z)$, where $Z=z/ i\sqrt{3}$. Similar expressions are obtained 
for $f'(z)$ (by standard derivation rules combined with partial reductions by (DEF))
and $f(-z)$ (since $\wp$ is an even function, while $\wp'$ is odd).
In this way, one automatically obtains  rational forms in $\wp,\wp'$ for 
\[
f(x),~f'(x),~f(y),~f'(y),~f(-x),~f(-y),~f(x+y),
\]
where the last one necessitates a substitution $z\mapsto x+y$, followed by
an application of the (ADD) rule.

Let now $\cal D$ be the difference between the left-hand side 
and the right-hand side of the relation to be proved, Equation~\eqref{mainadd}.
By the  process described above, $\cal D$ becomes a rational function,
with coefficients in $\Q( i\sqrt{3})$, 
in the \emph{four} quantities $X_1,Y_1,X_2,Y_2$, where $X_1=\wp(x/ i\sqrt{3})$,
$Y_1=\wp'(x/ i\sqrt{3})$, and similarly for $X_2,Y_2$, with~$y$ replacing~$x$. 
The (rather large) rational fraction 
normalizes to the form ${\cal D}=A/B$, with numerator~$A$ and denominator~$B$ involving, 
respectively, 2388 and 1256 monomials. One can then operate with
the rule (DEF), instantiated as
\[
Y_1^2\mapsto 4X_1^3+4, \qquad
Y_2^2\mapsto 4X_2^3+4,
\]
the corresponding reductions being simply effected by multivariate
polynomial divisions.
When this is done, we find that $A$ reduces to~0, while $B$ is reduced to a
\emph{nonzero} polynomial, which is of degree~1 in $Y_1,Y_2$ and of degree~8 in $X_1,X_2$.
The verification of the addition formula for $f(z)$ is thereby completed.
\end{proof}

A direct application of Theorem~\ref{sradd-thm} to the addition formula expressed by Proposition~\ref{add-prop} gives rise 
to our main continued fraction.

\begin{theorem}\label{main-thm} The ordinary generating function
 of the pseudo-factorials satisfies
\begin{equation}\label{confracaca}
F(z)
\equiv \sum_{n\ge0} \alpha_n z^n = \cfrac{1}{1+z+\cfrac{3\cdot 1^2\cdot z^2}{
1-z+\cfrac{2^2\cdot z^2}{1+3z+\cfrac{3\cdot 3^2\cdot z^2}{1-3z+\cfrac{4^2\cdot z^2}{\ddots}}}}},
\end{equation}
where the coefficients are, with the notations of the Jacobi form~\eqref{Jfrac}: 
\begin{equation}\label{caca}
c_j=(-1)^{j-1}\left(j+\frac{1+(-1)^j}{2}\right)
, \qquad
a_j=-j^2(2-(-1)^j).
\end{equation}
\end{theorem}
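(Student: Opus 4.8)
The plan is to deduce Theorem~\ref{main-thm} as a direct instance of the Stieltjes--Rogers correspondence (Theorem~\ref{sradd-thm}), using the addition formula for $f(z)$ just established in Proposition~\ref{add-prop}. First I would rewrite~\eqref{mainadd} in the normalized Stieltjes--Rogers shape~\eqref{sradd}. Expanding the two geometric-type factors $\bigl(1-\frac13\sigma(x)\sigma(y)\bigr)^{-1}=\sum_{j\ge0}3^{-j}\sigma(x)^j\sigma(y)^j$ with $\sigma(z)=1-f(z)f(-z)$, and using $h(z)=f(z)+f'(z)$, one sees that the summands group into an ``even'' family proportional to $f(z)\sigma(z)^j$ and an ``odd'' family proportional to $h(z)\sigma(z)^j$. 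Concretely I would set, for $j\ge0$,
\begin{equation*}
\varphi_{2j}(z)=\lambda_{2j}\,f(z)\sigma(z)^j,\qquad
\varphi_{2j+1}(z)=\lambda_{2j+1}\,h(z)\sigma(z)^j,
\end{equation*}
where the constants $\lambda_r$ are fixed by the valuation normalization $\varphi_r(z)=z^r/r!+O(z^{r+1})$. Since $f(z)=1-z-\cdots$, $h(z)=f(z)+f'(z)=-2z+O(z^2)$, and $\sigma(z)=3z^2+O(z^4)$ (as recorded in Section~\ref{experadd-sec} and via $g(z)=-f(z)f(-z)$ in Section~\ref{weier-sec}), each $\varphi_r$ indeed has valuation exactly $r$, and the $\lambda_r$ are explicit: $\sigma$ contributes a factor $3^j$ at lowest order, $f$ contributes $1$, $h$ contributes $-2$, so $\lambda_{2j}=3^{-j}/(2j)!$ and $\lambda_{2j+1}$ involves an extra $-\frac12$ from $h$. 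Matching the expansion of~\eqref{mainadd} against~\eqref{sradd} then pins down the weights $\omega_r$: from the two geometric series one reads off $\omega_{2j}/\lambda_{2j}^2=3^{-j}$ and $\omega_{2j+1}/\lambda_{2j+1}^2=-\frac13\cdot 3^{-j}$ (the $-\frac13$ being the coefficient of $h(x)h(y)$ in the numerator), so that $\omega_r$ is completely determined by elementary bookkeeping.

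With the $\varphi_r$ and $\omega_r$ in hand, the continued-fraction coefficients come from the closed formulas of Theorem~\ref{sradd-thm}, $a_j=\omega_j/\omega_{j-1}$ and $c_j=\varphi_{j,j+1}-\varphi_{j-1,j}$, where $\varphi_{j,k}=k![z^k]\varphi_j(z)$. Computing $a_j$ is then pure arithmetic in the $\omega_r$: the ratio of consecutive $\omega$'s produces the periodic pattern, yielding $a_{2j}=-j^2$-type and $a_{2j+1}$-type terms, which one checks matches $a_j=-j^2(2-(-1)^j)$. For the $c_j$ I would extract the subleading coefficient of each $\varphi_r$: writing $\varphi_{r}(z)=z^r/r!+\varphi_{r,r+1}z^{r+1}/(r+1)!+\cdots$, the quantity $\varphi_{r,r+1}$ is obtained from the known first two nonzero Taylor coefficients of $f$, $h=f+f'$, and $\sigma$. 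Since $f(z)=1-z-z^2+\cdots$ and $f'(z)=-1-2z+\cdots$, one gets $h(z)=-2z-3z^2+\cdots$ so the $O(z^{r+1})$ term of $\varphi_{2j+1}$ relative to its leading term carries the relevant shift, while $f$ and $\sigma=3z^2-3z^4+\cdots$ contribute to $\varphi_{2j}$. Forming the telescoping difference $\varphi_{j,j+1}-\varphi_{j-1,j}$ then collapses to the stated $c_j=(-1)^{j-1}\bigl(j+\frac{1+(-1)^j}{2}\bigr)$, and substituting into~\eqref{Jfrac} reproduces~\eqref{confracaca} after checking the first few convergents against~\eqref{inivals}.

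The main obstacle is bookkeeping rather than conceptual: one must verify that the ``ansatz'' $\varphi_{2j}\propto f\sigma^j$, $\varphi_{2j+1}\propto h\sigma^j$ genuinely produces a valid Stieltjes--Rogers decomposition, i.e. that the right-hand side of~\eqref{mainadd}, when expanded as a power series in $\sigma(x)\sigma(y)$ and in the bilinear terms $f(x)f(y)$, $h(x)h(y)$, really is a sum of products $\varphi_r(x)\varphi_r(y)$ with no cross terms — this is automatic here precisely because the numerator is a sum of a pure-$f$ and a pure-$h$ bilinear form and the denominator depends only on $\sigma(x)\sigma(y)$, so the geometric expansion separates cleanly. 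The one genuine check is the normalization: confirming that each $\varphi_r$ has valuation exactly $r$ (no accidental cancellation of the leading coefficient), which follows from $\sigma(z)\sim 3z^2$, $f(z)\sim 1$, $h(z)\sim -2z$ having nonzero leading terms, and then that the resulting $c_j,a_j$ simplify to~\eqref{caca}. I would do this simplification symbolically (or simply verify enough initial terms of the continued fraction against the recurrence~\eqref{alphadef}), since by the rigidity of the Stieltjes--Rogers correspondence a finite amount of agreement, together with the now-proven addition formula, forces the general pattern.
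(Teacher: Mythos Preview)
Your approach is exactly the one the paper takes: expand the denominator of~\eqref{mainadd} as a geometric series in $\sigma(x)\sigma(y)$, identify $\varphi_{2j}\propto f\sigma^j$ and $\varphi_{2j+1}\propto h\sigma^j$, normalize, and read off $a_j,c_j$ via Theorem~\ref{sradd-thm}. The structure of the argument is sound and matches the paper's proof step for step.

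There is, however, an arithmetic slip that would derail the actual computation. From~\eqref{egf} one has $f(z)=1-z-z^2+\tfrac{z^3}{3}+\cdots$ and hence $f'(z)=-1-2z+z^2+\cdots$, so that
\[
h(z)=f(z)+f'(z)=-3z+O(z^3),
\]
not $-2z-3z^2+\cdots$ as you write (note in particular that the $z^2$ coefficient vanishes). This matters twice. First, the normalizing factor coming from $h$ is $-\tfrac13$, not $-\tfrac12$; with your value the weights come out as $\omega_{2j+1}=-\tfrac{4}{3}\cdot 3^{j}(2j+1)!^2$, giving $a_1=-\tfrac{4}{3}$ instead of $-3$. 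Second, since $\sigma$ is even and $h$ has no $z^2$ term, $h(z)\sigma(z)^j$ has no $z^{2j+2}$ term, so $\varphi_{2j+1,2j+2}=0$; this vanishing is precisely what produces the alternating pattern $c_{2j}=-(2j{+}1)$, $c_{2j+1}=2j{+}1$. With the corrected expansions the paper finds $\omega_{2n}=3^n(2n)!^2$, $\omega_{2n+1}=-3^{n+1}(2n+1)!^2$, and~\eqref{caca} follows directly; no appeal to ``verifying enough initial terms'' is needed, since the $a_j,c_j$ are uniquely determined by the addition formula.
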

\begin{proof}
We make use of the addition formula~\eqref{mainadd},
which, taken under the form~\eqref{addf}, yields
\[
f(x+y)=\sum_{n=0}^\infty \omega_{2n}\varphi_{2n}(x)\varphi_{2n}(y)+
\sum_{n=0}^\infty \omega_{2n+1}\varphi_{2n+1}(x)\varphi_{2n+1}(y),
\]
with the $\omega_{n}$ and  $\varphi_n$ determined by
\[
\left\{\begin{array}{lll}
\omega_{2n}\varphi_{2n}(x)\varphi_{2n}(y)&=& + 3^{-n}f(x)f(y)\sigma(x)^n\sigma(y)^n\\
\omega_{2n+1}\varphi_{2n+1}(x)\varphi_{2n+1}(y)&=&
-3^{-n-1}h(x)h(y)\sigma(x)^n\sigma(y)^n\,.
\end{array}\right.
\]
We have, with the notations of~\eqref{addf}, $f(z)=1-z+O(z^2)$,
as well as $\sigma(z)\equiv 1-f(z)f(-z)=3z^2+O(z^4)$ and 
 $h(z)\equiv f(z)+f'(z)=-3z+O(z^3)$.
The required normalization of a Stieltjes--Rogers addition
formula, $\varphi_n(z)=z^n/n!+O(z^{n+1})$, combined with 
the low-order expansions of $f(z),h(z),\sigma(z)$, gives us
\begin{equation*}\label{omegas}
\omega_{2n}=3^n (2n)!^2, \qquad
\omega_{2n+1}=-3^{n+1}(2n+1)!^2,
\end{equation*}
as well as
\[
\varphi_{2n}(z)=\frac{z^{2n}}{(2n)!}-(2n+1)\frac{z^{2n+1}}{(2n+1)!}+O(z^{2n+2}),
\quad
\varphi_{2n+1}(z)=\frac{z^{2n+1}}{(2n+1)!}+O(z^{2n+3}).
\]
We thus have
$$\frac{\omega_1}{\omega_0}=-3,\quad\frac{\omega_2}{\omega_1}=-2^2,\
\frac{\omega_3}{\omega_2}=-3\cdot 3^2,\quad\frac{\omega_4}{\omega_3}=
-4^2,~\cdots,$$
and, for $c_j:=\varphi_{j,j+1}-\varphi_{j-1,j}$:
$$c_0=-1,\quad c_1=1,\quad c_2=-3,\quad c_3=3,\quad
c_4=-5,\quad c_5=5,
~\cdots\,.$$
By Theorem~\ref{sradd-thm}, the last two formulae conclude the proof of~\eqref{confracaca}.
\end{proof}

\section{\bf A family of orthogonal polynomials}\label{ortho-sec}

Our goal in this section consists in 
finding an explicit form for the polynomials that appear in the convergents
of the main continued fraction of Theorem~\ref{main-thm},
this by way of their exponential generating function.
We focus our attention  on the denominator polynomials, precisely, 
on their reciprocals, which form a family of formally
\emph{orthogonal polynomials} that appears to be new.

We start by specializing  to
the continued fraction under consideration~\eqref{confracaca}
some well-known algebraic properties found in~\cite{Perron57,Wall48}
that hold for an arbitrary Jacobi fraction~\eqref{Jfrac}.
The convergents of~\eqref{confracaca} are obtained by truncating the infinite fraction
before a numerator. In this way, a collection of rational fractions~$P_k(z)/Q_k(z)$
of increasing degrees is obtained,
\[
\frac{0}{1},\quad \frac{1}{1+z},
\quad{\frac {1-z}{1+2\,{z}^{2}}},
\quad {\frac {1+2\,
z+{z}^{2}}{1+3\,z+6\,{z}^{2}+10\,{z}^{3}}},
\quad {\frac {1-z+22\,{z}^{2}-30
\,{z}^{3}}{1+24\,{z}^{2}-8\,{z}^{3}+24\,{z}^{4}}},
\]
so that $Q_0=1$, $Q_1=1+z$, and so on.
The denominator  polynomials $Q_k$
satisfy a ``three-term recurrence'' relation,
\begin{equation}\label{Qrec}
Q_k=(1-c_{k-1}z)Q_{k-1}-a_{k-1}z^2Q_{k-2}.
\end{equation}
(The $P_k$ satisfy the same recurrence, but with initial
conditions $P_0=0$, $P_1=1$.) 
The reciprocal polynomials 
defined by
\begin{equation}\label{litq}
q_k(z)=z^k Q_k\left(\frac{1}{z}\right)
\end{equation}
then satisfy the recurrence
\begin{equation}\label{litqrec}
q_k=(z-c_{k-1})q_{k-1}-a_{k-1}q_{k-2},\qquad q_{-1}=0,\quad q_0=1,
\end{equation}
with the $a_j,c_j$ as in~\eqref{caca}. 
On general grounds, they are \emph{formally orthogonal} with respect to a (formal) measure
whose moments coincide with the pseudo-factorials,~$(\alpha_n)$. 
In other words, they are orthogonal with respect to the bilinear form
\[
\langle f,g\rangle=\langle fg\rangle , \qquad\hbox{with}\quad \langle z^n\rangle =\alpha_n.
\]
Observe finally that, once the $Q_k$ are known, the $P_k$ can somehow be regarded as known.
Indeed, relative to~\eqref{Jfrac}, one has,  in the sense of formal power series,
\[
Q_k(z)F(z)-P_k(z)=O(z^{2k}),
\]
so that the coefficients of the $P_k$ are 
expressible as a convolution of the two sequences
$[z^n]Q_k(z)$ and $\alpha_n\equiv [z^n]F(z)$.

We have the following characterization.

\begin{theorem} \label{qegf}
Let $\Upsilon(z,t)$ be the exponential generating function of 
the reciprocal polynomials $(q_k)$ of~\eqref{litq}, with coefficients~\eqref{caca}:
\[
\Upsilon(z,t):=\sum_{k=0}^\infty q_k(z) \frac{t^k}{k!}.
\]
Consider the algebraic curve
\begin{equation}\label{etadef}
2+3\,t+ 3t\left(1+t \right) \eta - 2\left(1-3\,{t}^{2}+3\,{t}^{4} \right)  \eta ^{3}=0,
\end{equation}
which is of genus~$0$ and is parametrized by
\begin{equation}\label{etaparam} 
 t=\frac{1}{3}{\frac { \left( {w}^{2}+3 \right) w}{{w}^{2}+1}},
\qquad
\eta=3\,{\frac { \left( w+1 \right)  \left( {w}^{2}+1 \right) }{{w}^{4}+3}
},
\end{equation}
and let $\eta(t)$ be the branch that satisfies $\eta(0)=1$:
\[
\eta(t)=1+t+2\frac{t^2}{2!}+10\frac{t^3}{3!}+24\frac{t^4}{4!}+
280\frac{t^5}{5!}+400\frac{t^6}{6!}+
12880\frac{t^7}{7!}-
\cdots\,.
\]
Define 
\begin{equation}\label{chidef}
\chi(t):=\sqrt{\eta(t)^2-\frac{2t(1+t)}{1-3t^2+3t^4}}
=1+\frac{t^2}{2!}-2\frac{t^3}{3!}+\frac{t^4}{4!}-100\frac{t^5}{5!}-575\frac{t^6}{6!}-\cdots
\end{equation}
and  introduce the fundamental elliptic integral
\begin{equation}\label{Jdef}
J(t):=\int_0^t \frac{du}{\sqrt{1-3u^2+3u^4}}
=t+3\frac{{t}^{3}}{3!}+45\frac{{t}^{5}}{5!}+1215\frac{{t}^{7}}{7!}+8505\frac{{t}^{9}}{9!}
-\cdots\,.
\end{equation}
Then, the generating function~$\Upsilon$ satisfies
\begin{equation}\label{great}
\Upsilon(z,t)=\eta(t)\cosh\left(z J(t)\right)+\chi(t)\sinh\left(z J(t)\right).
\end{equation}
\end{theorem}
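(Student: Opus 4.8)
The plan is to exploit the three-term recurrence~\eqref{litqrec} for the reciprocal polynomials $q_k$, which after multiplying by $t^k/k!$ and summing becomes a linear partial differential equation for $\Upsilon(z,t)$ in the variable $t$, with $z$ as a parameter. Because the coefficients $c_j,a_j$ in~\eqref{caca} are of period~$2$ in~$j$, the naive generating-function manipulation does not close up directly; instead I would split $\Upsilon$ into its even and odd parts in~$k$, say $\Upsilon=\Upsilon_0+\Upsilon_1$ where $\Upsilon_0=\sum q_{2m}t^{2m}/(2m)!$ and $\Upsilon_1=\sum q_{2m+1}t^{2m+1}/(2m+1)!$, and write down the coupled system of ODEs (in~$t$) that $(\Upsilon_0,\Upsilon_1)$ satisfy. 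The periodic modulation then produces \emph{polynomial-in-$t$} coefficients (reflecting exactly the ``Pollaczek with period~2'' phenomenon emphasized in the introduction), and one is left with a second-order linear ODE in~$t$ whose coefficients involve the polynomial $1-3t^2+3t^4$ that already appears in~\eqref{Jdef} and~\eqref{chidef}.

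Next I would verify that the proposed closed form~\eqref{great} solves this ODE. The key observation is that $J(t)$ in~\eqref{Jdef} is designed so that $J'(t)=1/\sqrt{1-3t^2+3t^4}$, i.e.\ $J'(t)^2(1-3t^2+3t^4)=1$; hence $\cosh(zJ(t))$ and $\sinh(zJ(t))$ are exactly the two independent solutions of the ``$z$-part'' of the equation, and substituting $\Upsilon=\eta(t)\cosh(zJ(t))+\chi(t)\sinh(zJ(t))$ reduces the PDE to a pair of ordinary differential equations for the coefficient functions $\eta(t)$ and $\chi(t)$. At this point I would check that the algebraic curve~\eqref{etadef}, parametrized by~\eqref{etaparam}, indeed furnishes a solution: differentiating the relation $2+3t+3t(1+t)\eta-2(1-3t^2+3t^4)\eta^3=0$ implicitly gives a first-order ODE for $\eta(t)$, and the definition~\eqref{chidef} of $\chi$ as $\sqrt{\eta^2-2t(1+t)/(1-3t^2+3t^4)}$ — equivalently $\chi^2(1-3t^2+3t^4)=\eta^2(1-3t^2+3t^4)-2t(1+t)$ — together with the curve equation pins down $\chi'(t)$ as well. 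Matching these against the ODEs extracted from the recurrence is then a finite algebraic verification, best carried out symbolically; the genus-$0$ rational parametrization~\eqref{etaparam} makes every quantity a rational function of the single parameter~$w$, so the check becomes a polynomial identity in~$w$.

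Finally I would confirm the initial conditions: from $q_{-1}=0$, $q_0=1$ one has $\Upsilon(z,0)=1$, which forces $\eta(0)=1$, $\chi(0)=1$, consistent with the stated expansions; and the first recurrence step $q_1=z-c_0=z+1$ fixes $\partial_t\Upsilon(z,0)=z+1=\eta(0)z J'(0)$ at order $z^1$ together with $\eta'(0)\cosh$-term, again matching $\eta'(0)=1$, $J'(0)=1$, $\chi(0)=1$. Since a second-order linear ODE in~$t$ with a regular point at $t=0$ has a unique solution with prescribed value and first derivative (for each fixed~$z$), uniqueness then closes the argument.

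\textbf{Main obstacle.} The delicate point is \emph{deriving} the correct ODE from the period-$2$ recurrence: one must handle the interaction between the even/odd splitting and the $z$-shift $q_k\mapsto z q_{k-1}$ carefully, and the bookkeeping that turns the bounded coefficients $c_j,a_j$ into the polynomial $1-3t^2+3t^4$ is where all the real content sits. Once that ODE is in hand — and once one trusts that the curve~\eqref{etadef} and the integral~\eqref{Jdef} were reverse-engineered to fit it — the remaining verification is mechanical. I would expect the authors to have obtained~\eqref{etadef}–\eqref{great} precisely by solving this ODE, so the written proof may well present it as a direct substitution-and-check rather than a derivation.
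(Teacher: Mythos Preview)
Your plan is sound and shares the paper's skeleton: split $\Upsilon$ by the parity of~$k$, extract from the period-2 recurrence a second-order linear ODE in~$t$ with polynomial coefficients (this is exactly the operator~$\mathfrak{A}$ the paper writes down, and its leading coefficient does carry the factor $1-3t^2+3t^4$ you anticipated), verify that the right-hand side of~\eqref{great} is annihilated, and match initial conditions at the ordinary point~$t=0$. Where you diverge from the paper is in the verification step. You propose to substitute $\eta\cosh(zJ)+\chi\sinh(zJ)$ directly into~$\mathfrak{A}$ and reduce everything, via the genus-$0$ parametrization~\eqref{etaparam}, to polynomial identities in the uniformizer~$w$. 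The paper does \emph{not} check that the right-hand side satisfies~$\mathfrak{A}$; instead it invokes the holonomic closure machinery ({\sc Gfun}): it builds a separate fourth-order annihilator~$\mathfrak{B}$ for the right-hand side by closure under products (each of $\eta,\chi,\cosh(zJ),\sinh(zJ)$ being individually holonomic), then a fourth-order annihilator~$\Delta$ for the difference by closure under sums, and finally checks agreement of Taylor coefficients through order~$t^3$. Your route is more elementary, exploits the rational parametrization that the paper merely records, and needs only two initial conditions; the paper's route is fully automated but pays with a higher-order operator and more initial data. One refinement worth flagging: the operator~$\mathfrak{A}$ has coefficients that are \emph{cubic} in~$z$, so after substitution and separation by $\cosh(zJ)$ and $\sinh(zJ)$ you do not literally get ``a pair of ODEs for $\eta,\chi$'' but rather two polynomials in~$z$ whose coefficients are algebraic functions of~$t$; each $z$-coefficient must vanish separately, and it is this overdetermined-looking system that the parametrization in~$w$ must clear.
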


Equation~\eqref{great}  was  first  arrived at  by   a combination  of
induction and of partly  heuristic calculations, based on ``guessing''
intermediate    differential equations as     well as on {\sc Maple}'s
symbolic integration capabilities.  Rather than offering a heavy proof
by  successive        transformations   of        the         defining
recurrence~\eqref{litqrec},    we  have   opted    to     present    a
computer-assisted verification of~\eqref{great}.  In this way, we feel
we  save symbols, hence pages, hence  trees. The price  to be paid was
only  a  few   hours  of interaction  with  the   symbolic  engine and
(eventually!) a few  seconds of computer  processing time.  As in  the
previous  section,  only  well-specified totally-algorithmic steps are
eventually  used.  Once   more, there  is  no  difficulty in  checking
intermediate steps against series   expansions  up to order  100   and
beyond.

Our proof of the identity~\eqref{great} eventually boils down to exhibiting 
a fourth-order differential operator in~$t$, with coefficients in~$\C(z)$, that is satisfied
by the difference between the two sides of~\eqref{great}. It is then sufficient to
check that both sides satisfy the same initial conditions
given by the coefficients of~$t^0$ up to~$t^3$.

The entire  process relies on the
\emph{holonomic framework} pioneered by Zeilberger~\cite{Zeilberger90},
with supporting theorems to be found in works of Stanley, such as~\cite{Stanley80} and~\cite[Ch.~6]{Stanley99}.
Let $\K$ be a ground field, which we take here to  be~$\C(z)$, the field of
rational fractions in~$z$.  (Throughout, we  treat the quantity~$z$ as
a  parameter.)  A formal power  series of~$\K[\![t]\!]$, simply called
``function'',    is   \emph{holonomic}   (alternative     names    are
differentiably   finite,  $D$--finite,  $\partial$--finite)  if it
satisfies  a linear differential   equation with  coefficients in  the
rational field $\K(t)$.  Equivalently, $h$  is holonomic if the vector
space over $\K(t)$ spanned  by all the derivatives  $\{\partial_t^j h\}$
is finite-dimensional.  Holonomic functions   are known to be   closed
under sum, product, differentiation, integration, and  algebraic
substitutions (i.e., substitutions of algebraic functions in place of variables). Finally, if~$h$ is holonomic, its sequence of coefficients $([z^n]h)$
satisfies a linear recurrence relation with coefficients in~$\K(n)$.

Clearly, a holonomic function is determined by
a  finite amount  of   information; namely,   a  defining differential
equation supplemented by sufficiently many initial conditions. 
Given two holonomic functions~$A,B$, one can then verify their 
conjectured identity as follows.
\begin{itemize}
\item[$(i)$] Compute a differential equation,  of order~$\omega$, say,
that is satisfied by the difference $A-B$.
\item[$(ii)$] Check the coincidence of 
the expansions of~$A$ and~$B$ up to terms of order $O(t^{\omega})$.
\end{itemize}
By the finiteness of the underlying vector spaces, the process constitutes a valid \emph{proof}
of~$A=B$, in ``non-singular'' cases at least\footnote{%
	An operator is said to be ``non-singular'' if the lead polynomial 
	of the associated recurrence (relating the coefficients~$h_n:=[t^n]h(t)$,
	of a solution~$h(t)$ and having coefficients in~$\K[n]$) has no root in~$\Z_{\ge0}$. 
	In the case of a non-singular operator of order~$\omega$, 
	the number of needed initial conditions equals~$\omega$.
	(In the ``singular'' case, 
	a higher, but still effectively computable, number may be needed.)
}. In our context,
it could be carried out
comparatively easily, thanks to the powerful {\tt Gfun} library developed by Salvy and Zimmermann~\cite{SaZi94}.

\def\Sh{\operatorname{{\bf S}}}\def\lo{\mathfrak{L}}\def\mo{\mathfrak{M}}

\begin{proof}[Proof (Theorem~\ref{qegf})]
In what follows, we use $\partial\equiv\partial_t$ to represent the differential
operator $\frac{\partial}{\partial t}$;
we denote by $\Sh\equiv \Sh_n$ the shift operator on infinite sequences $(u_n)$
such that  $\Sh(u_n)=u_{n+1}$. We let~$\Pi_r$ generically represent 
a polynomial of degree~$r$, either in~$t$ (for differential operators) or
in~$n$ (for difference operators),
with coefficients in~$\K$. As indicated before, the quantity~$z$ is
treated as a parameter. 
Our purpose is to prove $A=B$, where $A$ is the left-hand side of~\eqref{great} 
and~$B$ is the right-hand side:
\begin{equation}\label{AeqB}
A\mathop{=}^{?}B, \quad\hbox{with}~~  A:=\Upsilon(z,t), ~~ B:= \eta(t)
\cosh\left({z}J(t)\right)+\chi(t)\sinh\left({z}J(t)\right),
\end{equation}
and $\eta(t),\chi(t),J(t)$ as defined in the statement.
See Figure~\ref{AB-fig} for a summary of the main steps of our proof.


\smallskip
\emph{The left-hand side $(A)$.}
The parity inherent in the coefficients~\eqref{caca} suggests to introduce
the subsequences $r_n=q_{2n}$ and $s_n=q_{2n+1}$. The basic recurrence~\eqref{litqrec}
then relates $r_n$ to~$r_{n-1},s_{n-1}$ and $s_n$ to $r_n,s_{n-1}$; hence,
by substitution, the fact that the vector $(r_n,s_n)$ depends linearly on~$(r_{n-1},s_{n-1})$ 
via a matrix, whose coefficients are polynomial in~$n$ (and the parameter~$z$).
By instantiating this last relation at~$n+1$ and $n+2$, 
and using back substitution, there results  
that~$r_n$ and $s_n$ satisfy explicit linear recurrences of order~2 with
coefficients that are polynomial in~$n$.
The  difference operators annihilating~$(r_n)$ and~$(s_n)$ 
are found in this way to be of the form
\[
\Pi_1\Sh^2+\Pi_3\Sh^1+\Pi_5\Sh^0.
\]
Equivalently, the exponential generating functions 
\[
R(t)=\sum r_n \frac{t^{2n}}{(2n)!} \qquad\hbox{and}\qquad 
S(t)=\sum r_n \frac{t^{2n+1}}{(2n+1)!}
\]
are found to satisfy $\lo[R]=0$ and $\mo[S]=0$, where $\lo$ and~$\mo$ are each
of the form
\[
\Pi_5\partial^5+\Pi_4\partial^4+\cdots +\Pi_0\partial^0.
\]
In fact, second-order operators $\lo^{\circ}$ and
$\mo^{\circ}$ that appear to cancel~$R$ and~$S$, respectively, can be guessed
(roughly, by the method of indeterminate coefficients 
cleverly implemented in {\sc Maple}'s {\tt Gfun}).
The guesses can then be turned into full-fledged proofs by checking 
(with {\sc Maple}'s {\tt Ore\_algebra}, see~\cite{ChSa98}) the operator divisibility
relations: $\lo^{\circ}\,|\,\lo$ and~$\mo^{\circ}\,|\,\mo$.
Once this is done, a differential operator that annihilates $\Upsilon$ can be obtained by
making use of  properties of holonomic functions (effective closure
under sum) and  a further
round of simplification based on guessing. It is found in this way that
the second-order operator\footnote{
	Much to our surprise, {\sc Maple}'s symbolic integrator proposed 
	a solution to~$\frak{A}[f]=0$,
	which involved terms of
	the rough form $\exp\left(\pm {z} J(t)\right)$ and
	eventually led us to infer~\eqref{great}.
}
\begin{small}
\begin{equation}\label{Aop}\def\z{\zeta}\renewcommand{\arraycolsep}{1.5truept}
\begin{array}{lll}
\ds \frak{A}&=&4\left(1-3t^2+3t^4)((2t(t+1)\z-1\right)\partial^2
\\ && \ds {} +4
\left(24\,{t}^{5}\z+30\,{t}^{4}\z- \left(18+6\,\z \right) {t}^{3}-12\,{t}^{2}
\z- \left( 4\,\z-9 \right) t-2\,\z\right)
\partial^1
\\ && \ds {}
 +
\left(48\,{t}^{4}\z+72\,{t}^{3}\z-\left( 48+8\,{\z}^{3}-18\,\z \right) {t}^{2
} -\left( 6\,\z+8\,{\z}^{3}+12 \right) t+9+16\,\z+12\,{\z}^{2}\right)\partial^0,
\end{array}
\end{equation}
\end{small}%
with $\zeta:=z-1/2$, annihilates~$\Upsilon(z,t)$.

\begin{figure}\small
\[ \renewcommand{\arraycolsep}{2.5pt}
\begin{array}{lrlllll}
\hline
\ \\[-2mm]
A\equiv \Upsilon(z,t) \,:&  \Pi_6 \partial^2Y+\Pi_5\partial Y+\Pi_4 Y &=&0;&\hbox{see~$\frak{A}$ in~\eqref{Aop}}\\[2mm]
\eta(t), \chi(t) \,:&  \Pi_6 \partial^2Y+\Pi_5\partial Y+\Pi_4 Y &=&0;&\hbox{$\eta,\chi$ algebraic~\eqref{etadef}, \eqref{chidef}}\\[2mm]
\left\{\!\begin{array}{c}
\ds \exp\left(\pm {z}J(t)\right) \\[1mm]
\ds \cosh, \sinh\left({z}J(t)\right)
\end{array}\!\right\}\!: \quad&  \Pi_4 \partial^2 Y +\Pi_3\partial Y -z^2 Y &=&0;& 
\hbox{see~\eqref{2ndo}}\\[4mm]
\left\{\begin{array}{c}
\ds B^+\equiv \eta(t)\cosh\left({z}J(t)\right)
\\[1mm]
\ds B^-\equiv \chi(t)\sinh\left({z}J(t)\right)
\end{array}\right\}\!: &
\Pi_{12}\partial^4 Y 
+\cdots + \Pi_{8} Y &=&0;& 
\hbox{closure algorithm ($\times$)}\\[4mm]
\Delta:=A-(B^++B^-) & 
\Pi_{12}\partial^4 Y 
+\cdots + \Pi_{8} Y &=&0;& 
\hbox{closure algorithm ($+$)}\\[2mm]
\hline
\end{array}
\]
\caption{\label{AB-fig}\small
The shape of the differential equations satisfied by quantities intervening 
in the proof of the main equation~\eqref{great}.}
\end{figure}

\smallskip
\emph{The right-hand side $(B)$.} 
We can build up
differential equations starting with the explicit expression of~$B$ in~\eqref{AeqB}:
see again Figure~\ref{AB-fig} for a summary. 
Given a quantity $X$ that depends on~$z$, we set
$X^+=\frac12 X(z)+\frac12 X(-z)$ and $X^-=\frac12 X(z)-\frac12 X(-z)$,
defining its ``odd'' and ``even'' parts (in~$z$), respectively.
With $B\equiv\Upsilon(z,t)$, we then consider 
$B^+=\Upsilon^+=\eta(t)\cosh(zJ(t))$ and $B^-=\Upsilon^-=\chi(t)\sinh(zJ(t))$, and proceed to
construct the corresponding annihilators, $\frak{B}^+$ and~$\frak{B}^-$.

First, we observe that if $P(t)$ is an arbitrary polynomial,
then
\begin{equation}\label{2ndo}
Y(t):=\exp\left(z \int^t \frac{dw}{\sqrt{P(w)}}\right)
\quad\hbox{satisfies}\quad P \partial^2 Y +\frac12 P'\partial Y -z^2 Y=0.
\end{equation}
This equation is invariant by $z\leftrightarrow-z$, so that it is also satisfied 
when the exponential in~\eqref{2ndo} is replaced by~$\sinh,\cosh$. 

Next, the function~$\eta(t)$, given by a cubic algebraic equation,
is found to satisfy a second-order differential
equation with coefficients that are of degree at most~6.
The application of closure rules for products of holonomic functions then
provides for~$B^+\equiv\Upsilon^+$ a differential 
operator~$\frak{B^+}$ that is of order~4, with coefficients of degree at most~12.

We can then proceed to construct the annihilator~$\frak{B}^-$ 
of the odd part 
$B^-\equiv\Upsilon^-$. It turns out that the algebraic function 
$\chi(t)$ defined in~\eqref{chidef} 
satisfies \emph{the same differential equation} as $\eta(t)$, but with different
initial conditions ($\chi(0)=1$, $\chi'(0)=0$). There now results 
from this fact and the comments accompanying~\eqref{2ndo} that we can take
\[
\frak{B}=\frak{B^+}=\frak{B}^-,
\]
as annihilator of the right-hand side ($B$) of~\eqref{great}.

\smallskip
\emph{The comparison.}
Finally, it remains to verify that $A=B$. The operator $\Delta$ is defined
to annihilate ${\bf A}+{\bf B}$, where $\bf A$ and $\bf B$ are 
the vector spaces of solutions of $\frak{A}[f]=0$ and $\frak{B}[f]=0$,
respectively. By construction, the difference $A-B$ is such that $\Delta[A-B]=0$.
The operator, obtained by holonomic closure under sums,
is of type
\[
\Delta = \Pi_{12} \partial^4+\Pi_{11}\partial^3+\Pi_{10}
\partial^2+\Pi_9\partial^1+\Pi_8\partial^0.
\]
The associated recurrence operator is found to be of the form
$
\Pi_4 \Sh^{12}+\cdots + \Pi_4 \Sh^0,
$
with leading coefficient 
\[
(n+9)(n+10)(n+11)(n+12)(9+4z^2),
\]
so that $\Delta$ is non-singular at~$0$. It thus suffices to 
verify that the expansions of $A\equiv \Upsilon(z,t)$ and of the right-hand size $B$ 
in~\eqref{great} coincide till terms of order $O(t^4)$,
\[\hbox{$\ds
A,B=1+(z+1)t+(z^3+3z^2+6z+10)\frac{t^2}{2!}
+(z^4+24z^2-8z+24)\frac{t^3}{3!}+O(t^4),$}
\]
so as  to complete the proof that $A=B$.
Equation~\eqref{great} is now established.
\end{proof}

Orthogonal polynomials attached to continued fraction
expansions relative to elliptic functions, have been 
first studied by Carlitz and Al-Salam (see~\cite{IsMa99,IsVaYo01} for some more recent developments), 
and, as already mentioned, they form the subject of the monograph \emph{Elliptic Polynomials}
by Lomont and Brillhart ~\cite{LoBr01}. The family made explicit by Theorem~\ref{qegf}
does not appear to be captured by their classification and hence seems to be new.
Remarkably, in connection with birth-and-death processes having cubic weights,
Gilewicz \emph{et al.}~\cite{GiLeRuVa06} have recently discovered 
another new family of orthogonal polynomials, related to the expansion of the Dixonian function~sm
taken at~0 (as in~\cite{CoFl06}), rather than at the point $\pi_3/6$ 
that is needed here (cf Theorem~\ref{dix-thm}).

\section{\bf Consequences of the continued fraction expansion}\label{cong-sec}

The continued fraction of Theorem~\ref{main-thm} has several
interesting by-products that we now examine. These include
an explicit evaluation of Hankel determinants,
as well as elementary congruence properties of pseudo-factorials.

\paragraph{\bf\em Hankel determinants.}
It is well known that, generally, coefficients of a Jacobi fraction can be expressed
 as determinants. This fact is classically derived  from
Stieltjes's matrix version of the addition theorem~\cite[pp.~202--206]{Wall48};
it is equivalent to the $LDU$ decomposition of the Gram  matrix 
$H$, with entries $h_{i,j}=\langle z^i,z^j\rangle\equiv\langle z^{i+j}\rangle$ (for $i,j\ge0$),
which is also known as the Hankel matrix of the sequence~$\langle z^n\rangle$;
see for instance~\cite[\S2.1]{Ismail05}.
Conversely, any known continued fraction yields an explicit Hankel determinant evaluation.
Given this, an immediate consequence of Theorem~\ref{main-thm} is
the following.

\begin{corollary} Let~$m$ be a positive integer. 
The Hankel determinant of pseudo-factorials 
\[
H_m^{(0)}:=
\left| \begin{array}{cccc}
\alpha_0 & \alpha_1& \cdots & \alpha_{m-1} \\
\alpha_1 & \alpha_2& \cdots & \alpha_{m} \\
\vdots & \vdots & \ddots & \vdots \\
\alpha_{m-1} & \alpha_m& \cdots & \alpha_{2m-2} 
\end{array}
\right| 
\]
admits the closed form
\[
H_m^{(0)}= \prod_{j=1}^{m-1} a_j^{m-j}=
\left\{ \begin{array}{ll}
\ds (-1)^{m/2}3^{m^2/4}\left(\prod_{k=1}^{m-1} k!\right)^2 & \hbox{($m$ even)}\\
\ds (-1)^{(m-1)/2}3^{(m^2-1)/4}\left(\prod_{k=1}^{m-1} k!\right)^2 & \hbox{($m$ odd)},
\end{array}\right.
\]
where the~$a_j=-j^2(2-(-1)^j)$ are the continued fraction numerators of~\eqref{caca}.
\end{corollary}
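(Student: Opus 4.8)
The plan is to invoke the standard correspondence between Jacobi (continued) fractions and Hankel determinants. Recall that if a formal power series $F(z)=\sum_{n\ge0}\alpha_n z^n$ has a $J$--fraction expansion of the form~\eqref{Jfrac} with parameters $c_j$ and $a_j$, then the Hankel determinant $H_m^{(0)}=\det(\alpha_{i+j})_{0\le i,j\le m-1}$ of the moment sequence factors as
\[
H_m^{(0)}=\prod_{j=1}^{m-1} a_j^{\,m-j}.
\]
This is exactly the $LDU$ (equivalently $LU$) decomposition of the Hankel matrix alluded to in the paragraph preceding the corollary: the diagonal entries of the $D$ factor are the quantities $\kappa_0=1$, $\kappa_j=a_1a_2\cdots a_j$ (the squared norms of the monic orthogonal polynomials $q_k$ of~\eqref{litqrec}), and $H_m^{(0)}=\prod_{j=0}^{m-1}\kappa_j=\prod_{j=1}^{m-1}a_j^{\,m-j}$. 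So the first and only conceptual step is to quote this classical identity (references~\cite{Wall48,Ismail05} are already cited in the excerpt), with the continued fraction of Theorem~\ref{main-thm} supplying the $a_j$. Note that the $c_j$ play no role in $H_m^{(0)}$; they only enter the shifted Hankel determinants $H_m^{(1)}$.

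Next I would substitute the explicit values $a_j=-j^2\bigl(2-(-1)^j\bigr)$ from~\eqref{caca}, i.e. $a_j=-3j^2$ when $j$ is odd and $a_j=-j^2$ when $j$ is even, and compute $\prod_{j=1}^{m-1}a_j^{\,m-j}$ in closed form. This splits into three pieces: the sign $\prod_{j=1}^{m-1}(-1)^{m-j}$, the power of $3$ coming from the odd indices $\prod_{j\ \mathrm{odd},\ 1\le j\le m-1}3^{\,m-j}$, and the factorial part $\prod_{j=1}^{m-1}(j^2)^{\,m-j}=\bigl(\prod_{j=1}^{m-1}j^{\,m-j}\bigr)^2$. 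For the factorial part one uses the elementary reindexing $\prod_{j=1}^{m-1}j^{\,m-j}=\prod_{k=1}^{m-1}k!$ (each factor $\ell$ with $1\le\ell\le m-1$ appears in $k!$ for every $k\ge\ell$, i.e. with multiplicity $m-\ell$), so the factorial part is $\bigl(\prod_{k=1}^{m-1}k!\bigr)^2$, matching the stated formula. For the sign: $\sum_{j=1}^{m-1}(m-j)=\binom{m}{2}$, and $(-1)^{\binom m2}$ is $(-1)^{m/2}$ for $m$ even and $(-1)^{(m-1)/2}$ for $m$ odd, which is exactly the sign in the two cases. Finally, the power of $3$: the exponent is $\sum_{j\ \mathrm{odd},\,1\le j\le m-1}(m-j)$; writing $j=2i-1$ and summing gives $m^2/4$ when $m$ is even and $(m^2-1)/4$ when $m$ is odd, agreeing with the displayed exponents of $3$. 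Assembling the three pieces yields precisely the claimed closed form in both parities.

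The main (and essentially only) obstacle is purely bookkeeping: correctly evaluating the three product/sum identities above — in particular getting the power of $3$ right by carefully handling the parity of $m$ relative to the largest odd index $\le m-1$, and verifying the reindexing $\prod_{j=1}^{m-1}j^{\,m-j}=\prod_{k=1}^{m-1}k!$. There is no analytic difficulty whatsoever; the entire content is the classical $J$--fraction/Hankel dictionary, which is already set up in the text, together with the explicit coefficients from Theorem~\ref{main-thm}. One should also note in passing the trivial initial case $H_1^{(0)}=\alpha_0=1$ (empty products), which is consistent with the formula, and remark that positivity fails (the determinants alternate in sign through the sign factor $(-1)^{\lfloor m/2\rfloor}$), reflecting the fact that the underlying orthogonality is only \emph{formal}, as already emphasized in Section~\ref{ortho-sec}.
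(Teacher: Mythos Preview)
Your proposal is correct and follows exactly the approach the paper intends: the corollary is stated there as an immediate consequence of the classical $J$--fraction/Hankel dictionary (the $LDU$ decomposition alluded to just before the statement) combined with the explicit coefficients~$a_j$ from Theorem~\ref{main-thm}, and your computation of the sign, the power of~$3$, and the factorial part simply makes that ``immediate'' explicit.
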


\paragraph{\bf\em Congruences.}

\def\MOD{\operatorname{mod}}

\begin{figure}\small
\begin{center}
\begin{small}
\renewcommand{\tabcolsep}{1.2pt}
\begin{tabular}{r|rrrrrrrrrrrrrrrrrrrrrrrrrrr}
\hline
\hline
$M$ &$n={}$& 0&1&2&~3&4&5&6&7&8&9&10&11&12&13&14&15&16&17&18&19&20&21&22&23&24&25\\
\hline
2&&1&1&0&0&0&0&0&0&0&0&0&0&0&0&0&0&0&0&0&0&0&0&0&0&0&0\\
3&&1&2&1&2&1&2&1&2&1&2&1&2&1&2&1&2&1&2&1&2&1&2&1&2&1&2\\
4&&1&3&2&2&0&0&0&0&0&0&0&0&0&0&0&0&0&0&0&0&0&0&0&0&0&0\\
5&&1&4&3&2&1&0&0&0&0&0&0&0&0&0&0&0&0&0&0&0&0&0&0&0&0&0\\
6&&1&5&4&2&4&2&4&2&4&2&4&2&4&2&4&2&4&2&4&2&4&2&4&2&4&2\\
7&&1&6&5&2&2&2&2&4&1&6&6&6&6&5&3&4&4&4&4&1&2&5&5&5&5&3\\
8&&1&7&6&2&0&0&0&0&0&0&0&0&0&0&0&0&0&0&0&0&0&0&0&0&0&0\\
9&&1&8&7&2&7&5&4&5&1&8&1&2&7&2&4&5&4&8&1&8&7&2&7&5&4&5\\
10&&1&9&8&2&6&0&0&0&0&0&0&0&0&0&0&0&0&0&0&0&0&0&0&0&0&0\\
11&&1&10&9&2&5&4&10&6&5&1&1&0&0&0&0&0&0&0&0&0&0&0&0&0&0&0\\
12&&1&11&10&2&4&8&4&8&4&8&4&8&4&8&4&8&4&8&4&8&4&8&4&8&4&8\\
13&&1&12&11&2&3&12&5&0&5&1&4&2&1&11&9&4&6&11&10&0&10&2&8&4&2&9\\
14&&1&13&12&2&2&2&2&4&8&6&6&6&6&12&10&4&4&4&4&8&2&12&12&12&12&10
\\
15&&1&14&13&2&1&5&10&5&10&5&10&5&10&5&10&5&10&5&10&5&10&5&10&5&
10&5\\
16&&1&15&14&2&0&8&0&0&0&0&0&0&0&0&0&0&0&0&0&0&0&0&0&0&0&0\\
17&&1&16&15&2&16&11&3&3&5&16&7&12&10&7&1&10&1&0&0&0&0&0&0&0&0&0\\
18&&1&17&16&2&16&14&4&14&10&8&10&2&16&2&4&14&4&8&10&8&16&2&16&14
&4&14\\
19&&1&18&17&2&16&17&3&14&0&17&6&9&8&3&18&0&15&8&7&11&3&16&14&3&5
&17\\
20&&1&19&18&2&16&0&0&0&0&0&0&0&0&0&0&0&0&0&0&0&0&0&0&0&0&0\\
\hline\hline\end{tabular}
\end{small}
\end{center}
\caption{\label{cong-fig}\small
The congruences $(\alpha_n \bmod M)$ for $M=2,\ldots,20$
and $n=0,\ldots,25$.
}
\end{figure}

A cursory examination of the $\alpha_n$ suggests clear divisibility patterns;
for instance, from~\eqref{inivals}, we immediately expect the $\alpha_n$ to be
divisible by~10, for $n$ large enough.
Figure~\ref{cong-fig} tabulates arithmetic congruence properties of the $\alpha_n$ for
small values of the modulus~$M$ and of the index~$n$.
The table obviously has much structure: the sequence~$(\alpha_n)$
appears to be eventually~$0$ modulo the numbers $2,4,5,8,10,11,16,17,20$;
there are obvious periodically reproducing
patterns, such as  $\overline{1,2}$ (mod~3), $\overline{4,2}$ (mod~6),
or the more recondite, and curiously repetitive,
\[
\hbox{$\overline{6,5,2,2,2,2,4,1,6,6,6,6,5,3,4,4,4,4,1,2,5,5,5,5,3,6,1,1,1,1,2,4,3,3,3,3}$}
\]
of length~36 corresponding to modulus~7. We state here a simple consequence of
our main continued fraction~\eqref{confracaca} in Theorem~\ref{main-thm}.

\begin{corollary} \label{cong-thm}
$(i)$~The sequence~$(\alpha_n)$ of pseudo-factorials 
is eventually periodic modulo any integer $M\ge2$.
$(ii)$~For each~$m\ge2$, the sequence~$(\alpha_n)$
satisfies modulo~$M=3^{\lceil m/2\rceil}m!^2$ a 
linear recurrence with constant coefficients that is of order at most~$m$.
\end{corollary}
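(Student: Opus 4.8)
The plan is to exploit the explicit continued fraction of Theorem~\ref{main-thm} together with the three-term recurrence~\eqref{Qrec}--\eqref{litqrec} satisfied by the denominator polynomials $Q_k$ and their reciprocals $q_k$. The starting observation is that, for a Jacobi fraction, one has $Q_k(z)F(z)-P_k(z)=O(z^{2k})$, so that the first $2k$ coefficients of $F$ — that is, $\alpha_0,\ldots,\alpha_{2k-1}$ — are determined by the linear relation $Q_k(z)F(z)=P_k(z)+O(z^{2k})$; equating coefficients of $z^n$ for $n\ge \deg P_k$ shows that the sequence $(\alpha_n)$ satisfies, for $n$ large enough, the fixed linear recurrence whose characteristic polynomial is $Q_k$ (equivalently, whose coefficients are read off from $q_k$), \emph{provided} $Q_k$ happens to have the right form, i.e. provided the tail of the continued fraction is ``absorbed''. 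More precisely, the key is: if for some $k$ the coefficient $a_k$ of the Jacobi fraction vanishes modulo $M$, then modulo $M$ the fraction truncates, $F(z)\equiv P_k(z)/Q_k(z) \pmod M$, and hence $(\alpha_n \bmod M)$ satisfies the linear recurrence with constant coefficients $Q_k(z)\equiv0$ of order $\deg Q_k = k$.

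First I would make this precise. Reducing~\eqref{confracaca} modulo $M$, all the partial numerators and denominators $a_j,c_j$ of~\eqref{caca} become elements of $\Z/M\Z$. Since $a_j=-j^2(2-(-1)^j)$, we have $a_j\equiv 0\pmod M$ as soon as $M\mid j^2(2-(-1)^j)$; in particular, with $m\ge 2$ and $M=3^{\lceil m/2\rceil}m!^2$ one checks directly that $a_m\equiv 0 \pmod M$: indeed $m!^2\mid j^2$ is too strong, so instead one argues that $3^{\lceil m/2\rceil}m!^2 \mid a_m$ fails in general and one must be more careful — the correct statement is that $M$ divides $a_1a_2\cdots a_{m}$ appearing as a \emph{product}, or rather that the reciprocal polynomial $q_m$ already has the property that $q_m F \equiv (\text{polynomial})\pmod M$. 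The cleanest route: show by induction on $k$, using $q_k=(z-c_{k-1})q_{k-1}-a_{k-1}q_{k-2}$, that the ``remainder'' $q_k(z)F(z)$ — viewed via the convergent identity $Q_k F - P_k = O(z^{2k})$, equivalently for the reciprocals — telescopes so that $F(z)\equiv P_m(z)/Q_m(z)\pmod M$ precisely when $\prod_{j=1}^{m} a_j\equiv 0\pmod M$ in a suitable sense, which indeed holds for $M=3^{\lceil m/2\rceil}m!^2$ since the numerators $a_1,\ldots,a_m$ are $\{-3\cdot1^2,-2^2,-3\cdot3^2,-4^2,\ldots\}$ whose product is $\pm 3^{\lceil m/2\rceil} (m!)^2$, precisely $M$ up to sign. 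Once $F(z)\equiv P_m(z)/Q_m(z)\pmod M$ with $Q_m$ of degree $\le m$ and $Q_m(0)=1$, the relation $Q_m(z)F(z)\equiv P_m(z)\pmod M$ gives, upon comparing coefficients of $z^n$ for $n>\deg P_m$, a linear recurrence of order $\le m$ with constant coefficients (the coefficients of $Q_m$) satisfied by $(\alpha_n\bmod M)$ — this proves part~$(ii)$. For part~$(i)$, any eventually-linear-recurrent sequence over the finite ring $\Z/M\Z$ is eventually periodic (the state vector of length $m$ lives in a finite set, so the orbit under the recurrence is eventually periodic); alternatively, invoke part~$(ii)$ with, say, any $m$ for which $3^{\lceil m/2\rceil}m!^2$ is a multiple of $M$ — such $m$ always exists — and conclude eventual periodicity modulo $M$ from eventual linear recurrence.

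I would then organize the write-up as: (a) recall $Q_kF-P_k=O(z^{2k})$; (b) show the product of the numerators $a_1\cdots a_m$ equals $\pm M$, so that $a_{j}\equiv 0$ is achieved ``collectively'' and the continued fraction, reduced mod $M$, is equivalent to its $m$-th convergent — here one must be slightly careful: it is not that a single $a_j$ vanishes, but that after clearing the single denominator $Q_m$ in $P_m/Q_m$, the error term $Q_mF-P_m=O(z^{2m})$ has \emph{all} its coefficients divisible by $M$, which follows from tracking that each coefficient of $Q_mF-P_m$ is, by the standard convergent theory, an integer multiple of $a_1a_2\cdots a_m=\pm M$; (c) deduce $Q_m(z)F(z)\equiv P_m(z)\pmod M$ exactly (all coefficients), hence the order-$\le m$ recurrence; (d) deduce eventual periodicity.

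The main obstacle I anticipate is step (b)--(c): justifying rigorously that \emph{every} coefficient of the error $Q_m(z)F(z)-P_m(z)$ — not just the first $2m$, which vanish identically over $\Z$ — is divisible by $a_1\cdots a_m$. This requires either a clean lemma from the theory of Jacobi fractions (the coefficients of $Q_kF-P_k$ past order $2k$ are divisible by $\prod_{j\le k}a_j$, which can be seen from the matrix-continued-fraction / $LDU$-of-Hankel picture already invoked in the Hankel-determinant corollary, where $\prod a_j$ are the pivots), or a direct induction on the recurrence~\eqref{Qrec} tracking divisibility. Getting the exact power of $3$ right — $\lceil m/2\rceil$ — is just the bookkeeping $\prod_{j=1}^m(2-(-1)^j)=3^{\#\{j\le m:\ j\ \mathrm{odd}\}}=3^{\lceil m/2\rceil}$, combined with $\prod_{j=1}^m j^2=(m!)^2$, so that $|a_1\cdots a_m|=3^{\lceil m/2\rceil}(m!)^2=M$ on the nose; this is the reason the statement is phrased with that particular modulus.
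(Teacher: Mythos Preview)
Your proposal is correct and follows essentially the same route as the paper: both reduce to showing $F(z)\equiv P_m(z)/Q_m(z)\pmod{M}$ with $M=|a_1\cdots a_m|=3^{\lceil m/2\rceil}(m!)^2$, then read off the order-$m$ recurrence from $Q_m$. The paper dispatches your ``main obstacle'' by invoking the classical determinant identity $P_{k+1}/Q_{k+1}-P_k/Q_k=a_1\cdots a_k\,z^{2k}/(Q_kQ_{k+1})$ and telescoping, which immediately gives that every coefficient of $F-P_m/Q_m$ (and hence of $Q_mF-P_m$, since $Q_j(0)=1$) is divisible by $a_1\cdots a_m$; for part~$(i)$ the paper argues slightly more directly, noting that $M^2\mid a_M$ for the single index~$M$, so the fraction truncates at level~$M$ modulo~$M^2$, whereas you deduce~$(i)$ from~$(ii)$ --- both work.
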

\begin{proof}[Proof (sketch)]
The statement is an instance of the general fact that $J$--fraction expansions with
integer coefficients automatically imply congruence properties~\cite{Flajolet82}.

$(i)$~The main continued fraction~\eqref{confracaca} representing
the ordinary generating function~$F(z)$ of pseudo-factorials has a factor 
of~$M^2$ at its numerator of rank $(M+1)$. 
In particular, the contributions induced by the stages
$(M+1)$, $(M+2)$, and so on, of this continued fraction are zero modulo~$M^2$. 
In other words, $F(z)$
is congruent modulo~$M^2$ to the $M$th convergent of the continued fraction. 
Thus, it satisfies, modulo~$M^2$, a linear recurrence of order at most~$M$.
Hence it is eventually periodic modulo~$M$.

$(ii)$~The estimate above of the order of the recurrence satisfied by modular
reductions of the pseudo-factorials can be vastly improved~\cite{Flajolet82}.
By the classical $(2\times2)$--determinant 
identity of orthogonal polynomials and 
 convergent denominators, the difference of two successive convergents of~\eqref{Jfrac}
satisfies the identity
\[
\frac{P_{k+1}(z)}{Q_{k+1}(z)}-\frac{P_{k}(z)}{Q_{k}(z)}=
\frac{a_1 a_2\cdots a_{k}z^{2k}}{Q_{k}(z)Q_{k+1}(z)}.
\]
This specializes to the $J$--fraction~\eqref{confracaca} relative to~$F(z)$,
when the $a_j$ are taken to be as in~\eqref{caca}.
By expressing that $F(z)$ is the sum of the differences of its 
successive convergents, we then
obtain, for any~$m\ge0$,
\[
F(z)=\frac{P_m(z)}{Q_m(z)}+\sum_{k\ge m}
\frac{a_1 a_2\cdots a_{k}z^{2k}}{Q_{k}(z)Q_{k+1}(z)}.
\]
In particular, since the~$a_j$ are all integers and the $Q_j$ are integral with $Q_j(0)=1$,
we have,
\[
F(z) \equiv \frac{P_m(z)}{Q_m(z)} \quad (\MOD M), \qquad\hbox{with $M=a_1a_2\cdots a_{m}$},
\]
in the sense that coefficients of both series are equal, after reduction modulo~$M$.
Thus, modulo~$M$, the $\alpha_n$ satisfy a linear recurrence whose characteristic 
polynomial is exactly the denominator polynomial~$Q_m(z)$ (reduced mod~$M$).
\end{proof}

As an illustration, corresponding to~$m=1,2,3$, we find the congruences
\[
\begin{array}{l}
\ds F(z)\equiv \frac{1}{1+z} \quad (\MOD~3\cdot 1!^2),
\qquad
F(z)\equiv \frac{1-z}{1+2z^2} \quad(\MOD~ 3\cdot 2!^2),
\\
\ds F(z)\equiv \frac{1+2z+z^2}{1+3z+6z^2+10z^3}
\quad (\MOD~3^2\cdot 3!^2),
\end{array}
\]
which already justify the data of Figure~\ref{cong-fig}
for moduli $2,3,4,6,9,12,18$; for instance,
from the convergent $P_1/Q_1$, we find $\alpha_n \equiv (-1)^n ~~(\MOD 3)$.
For $m=7$, the form
\[
\frac{P_7(z)}{Q_7(z)}
\equiv 5+\frac{3+6z+5z^2+2z^3+2z^4+2z^5}{1+4z^6} \quad (\MOD~7)
\]
explains the observed patterns of~$(\alpha_n)$ modulo~7 and the period equal to~36. 
By contrast, for $m=11$, we find that
\[
\frac{P_{11}(z)}{Q_{11}(z)}
\equiv 1+10z+9z^2+2z^3+5z^4+4z^5+10z^6+6z^7+5z^8+z^9+z^{10} \quad (\MOD~11)
\]
($Q_{11}$ reduces to~1 modulo 11),
thereby establishing that the $\alpha_n$ with $n\ge11$ are all divisible by~11.

As the previous discussion suggests, congruence properties 
of pseudo-factorials are tightly linked to arithmetic properties
of the $Q$ polynomials whose exponential generating function
has been determined in~Theorem~\ref{qegf}. 
Let again~$\Pi_r$ denote an unspecified polynomial of degree~$r$.
Without attempting a general discussion,
we only remark here the existence of striking regularities,
as summarized by the following data. First, for $m$ a prime of the form $6\mu+1$:
\[
\hbox{\renewcommand{\arraystretch}{1.9}$\ds
\begin{array}{cccc}
\hline\hline
\\[-10mm] \MOD~7 & \MOD~ 13 & \MOD~ 19 & \MOD~31\\
\hline
\ds \frac{P_7}{Q_7}\equiv \frac{\Pi_6}{1+4z^6} &
\ds \frac{P_{13}}{Q_{13}}\equiv \frac{\Pi_{12}}{1+11z^{12}} &
\ds \frac{P_{19}}{Q_{19}}\equiv \frac{\Pi_{18}}{1+11z^{18}} &
\ds \frac{P_{31}}{Q_{31}}\equiv \frac{\Pi_{30}}{1+4z^{30}}.\\[2.5mm]
\hline\hline\end{array}
$}
\]
Finally, for $m$ a prime of the form $6\mu+5$:
\[
\hbox{\renewcommand{\arraystretch}{1.9}$\ds
\begin{array}{cccc}
\hline\hline
\\[-10mm] \MOD~5 & \MOD~ 11 & \MOD~ 17 & \MOD~23\\
\hline
\ds \frac{P_5}{Q_5}\equiv {\Pi_4} &
\ds \frac{P_{11}}{Q_{11}}\equiv {\Pi_{10}} &
\ds \frac{P_{17}}{Q_{17}}\equiv {\Pi_{16}} &
\ds \frac{P_{23}}{Q_{23}}\equiv {\Pi_{22}}.\\[2.5mm]
\hline\hline\end{array}
$}
\]


\section{Conclusion}\label{concl-sec}

The relation between elliptic functions and  continued fractions is an
old  subject, one that is especially  rich.  Connections  are manifest with
the theta    function   framework  in  the   form  of  various   types
of~$q$--series expansions,  starting with Eisenstein and including the
celebrated      Rogers--Ramanujan  identities~\cite[Ch.~16]{Berndt91}.
Closer to our perspective are early contributions due to Stieltjes and
Rogers regarding  the Jacobian sn,cn  framework.  Conrad, first in his
dissertation~\cite{Conrad02}      then       in   collaboration    with
Flajolet~\cite{CoFl06}, has elicited new connections with the Dixonian
framework of the sm,cm functions.  As the present work supplemented by
further investigations  of  ours indicate, there are   new
continued fractions to be explored, attached to the Weierstra{\ss} and
Dixonian frameworks.  In this vein, we have recently discovered new 
elliptic continued fractions,
relative to ``equiharmonic'' and ``lemniscatic'' numbers, which are
lattice analogues of the pseudo-factorials---we plan to report on these 
in a future publication.

\bigskip
\noindent
\begin{small}%
{\bf Acknowledgements.}  The work of P. Flajolet was supported in part by
the  SADA and LAREDA Projects  of  the French National Research Agency
(ANR).  The   authors are  grateful  to  Fr\'ed\'eric   Chyzak, Manuel
Kauers,  and Bruno Salvy  for  insightful discussions  relative to 
computer verification of identities.   They are also indebted to Bruno
Salvy for making  available his upgraded version  of the {\sc Maple} {\sf
Gfun} package on   which the developments  of  Section~\ref{ortho-sec}
could be most effectively built. Thanks finally to Robin Chapman, an anonymous referee,
and the editor for encouraging remarks and constructive suggestions. \par
\end{small}


\def\cprime{$'$}

 \end{document}